\numberwithin{equation}{section}
\newtheorem{theorem}{Theorem}
\newtheorem{lemma}{Lemma}
\newtheorem{proposition}{Proposition}
\newtheorem{remark}{Remark}
\newtheorem{definition}{Definition}
\numberwithin{theorem}{section}
\numberwithin{corollary}{section}
\numberwithin{lemma}{section}
\numberwithin{definition}{section}
\numberwithin{proposition}{section}
\numberwithin{remark}{section}
\newcommand{\medint}{-\kern  -,375cm\int}
\begin{document}
\title[Weighted symmetrization results for a problemwith variable Robin
parameter]{ Weighted symmetrization results for a problem with variable Robin
parameter}
\author[A. Alvino]{ Angelo Alvino }
\address{ Cristina Trombetti \\
Universit\`a degli studi di Napoli ``Federico II''\\
Dipartimento di Ma\-te\-ma\-ti\-ca e Applicazioni ``R. Caccioppoli''\\
Complesso di Monte Sant'Angelo, Via Cintia, 80126 Napoli, Italia. }
\email{angelo.alvino@unina.it}
\author[F. Chiacchio]{ Francesco Chiacchio }
\address{ Francesco Chiacchio \\
Universit\`a degli studi di Napoli ``Federico II''\\
Dipartimento di Ma\-te\-ma\-ti\-ca e Applicazioni ``R. Caccioppoli''\\
Complesso di Monte Sant'Angelo, Via Cintia, 80126 Napoli, Italia. }
\email{francesco.chiacchio@unina.it}
\author[C. Nitsch]{ Carlo Nitsch }
\address{ Carlo Nitsch \\
Universit\`a degli studi di Napoli ``Federico II''\\
Dipartimento di Ma\-te\-ma\-ti\-ca e Applicazioni ``R. Caccioppoli''\\
Complesso di Monte Sant'Angelo, Via Cintia, 80126 Napoli, Italia. }
\email{c.nitsch@unina.it}
\author[C. Trombetti]{ Cristina Trombetti }
\address{ Cristina Trombetti \\
Universit\`a degli studi di Napoli ``Federico II''\\
Dipartimento di Ma\-te\-ma\-ti\-ca e Applicazioni ``R. Caccioppoli''\\
Complesso di Monte Sant'Angelo, Via Cintia, 80126 Napoli, Italia. }
\email{cristina@unina.it}
\date{\today}
\maketitle

\begin{abstract}
By means of a suitable weighted rearrangement, we obtain various apriori
bounds for the solutions to a Robin problem. 
Among other things, we derive a family of Faber-Krahn type inequalities.
\end{abstract}

\vspace{.5cm}

\noindent\textsc{MSC 2020:} 35J25, 35B45 \newline
\textsc{Keywords}: Weighted symmetrization, Robin problem, Comparison results

\section{Introduction}

The last two decades have seen a growing interest in the study of the
Robin-Laplacian (see, e.g., \cite{B, BD, BG1, BG, D}). Recently, in \cite
{ANT}, it has been introduced a method that allows to obtain Talenti-type
results for this type of operator. The results in \cite{ANT} are quite
surprising since, as well known, the techniques introduced by Talenti in 
\cite{T} are tailored for problems whose solutions have level sets that do
not intersect the domain where the problem is defined, a phenomenon that
typically occurs when Robin boundary conditions are imposed.

Such an analysis is pushed even further in \cite{ACNT}, where the Robin
parameter is allowed to be a function bounded from above and below by two
positive constants. Let us emphasize that the methods used in \cite{ANT} and 
\cite{ACNT} are based on the classical Schwarz symmetrization, where, as
well known, the standard isoperimetric inequality plays a crucial role. For
related results see also \cite{K, CGNT} and the reference therein.

Here we consider the following problem

\begin{equation}
\left\{ 
\begin{array}{cc}
-\Delta u=f(x)\left\vert x\right\vert ^{l} & \text{in \ }\Omega \\ 
&  \\ 
\dfrac{\partial u}{\partial \nu }+\beta \left\vert x\right\vert ^{l/2  }u=0 & 
\text{on \ }\partial \Omega
\end{array}
\right.  \label{P}
\end{equation}
where, here and throughout the paper, $\Omega $ is a bounded Lipschitz
domain of $\mathbb{R}^{2}$ containing the origin,
$\beta >0,$ $l\in (-2,0],$ $\nu $ denotes the
outer unit normal to $\partial \Omega $ and $f(x)$ is a non negative
function in $L^{2}(\Omega ,\left\vert x\right\vert ^{l}dx)$, see sections 2
and 4 for definitions and some properties of this weighted space. A\ weak
solution to problem (\ref{P}) is a function $u\in H^{1}(\Omega )$ such that 
\begin{equation}
\int_{\Omega }\nabla u\nabla \psi dx+\beta \int_{\partial \Omega } u \psi
\left \vert x \right \vert ^{l/2} d \mathcal{H}^{1} 
=
 \int_{\Omega } f \psi
\left\vert x\right\vert ^{l}dx\text{ \ \ }\forall \psi \in H^{1}(\Omega ).
\label{Weak_Form}
\end{equation}
Our main results are based on a family of isoperimetric inequalities where
two different weights (which are powers of the distance from the origin)
appear in the perimeter and in the area element, respectively.

We will denote by $\Omega ^{\sharp }$ the disk centered at the origin of
radius $r^{\sharp }$, \ with $r^{\sharp }$ such that 
\begin{equation*}
\left\vert \Omega \right\vert _{l}:=\int_{\Omega }\left\vert x\right\vert
^{l}dx=\int_{\Omega ^{\sharp }}\left\vert x\right\vert ^{l}dx.
\end{equation*}
Let us introduce the so-called symmetrized problem 
\begin{equation}
\left\{ 
\begin{array}{cc}
-\Delta v=f^{\sharp }(x)\left\vert x\right\vert ^{l} & \text{in \ }\Omega
^{\sharp } \\ 
&  \\ 
\dfrac{\partial v}{\partial \nu }+\beta \left( r^{\sharp }\right) ^{l/2}v=0 & 
\text{on \ }\partial \Omega ^{\sharp },
\end{array}
\right.  \label{P_sharp}
\end{equation}
where $f^{\sharp }(x)$ is the unique radial and radially decreasing function
such that 
\begin{equation*}
\left\vert \left\{ x\in \Omega :f(x)>t\right\} \right\vert _{l}=\left\vert
\left\{ x\in \Omega :f^{\sharp }(x)>t\right\} \right\vert _{l}\text{ \ for
any }t\geq 0.
\end{equation*}
Our main results are contained in the following three theorems.

\vspace{.5cm}

\begin{theorem}
\label{L^1_L^2}Let $u$ and $v=v^{\sharp }$ be the solutions to problems (\ref
{P}) and (\ref{P_sharp}), respectively. If $f(x) \in L^{2}\left( \Omega
;\left\vert x\right\vert ^{l}dx\right) $ and $f(x)\geq 0$ a.e. in $\Omega $
then 
\begin{equation}
\left\Vert v\right\Vert _{L^{1}(\Omega ^{\sharp };\left\vert x\right\vert
^{l}dx)} = \int_{\Omega ^{\sharp }}\left\vert v(x)\right\vert \left\vert
x\right\vert ^{l}dx \geq \int_{\Omega }\left\vert u(x)\right\vert \left\vert
x\right\vert ^{l}dx = \left\Vert u\right\Vert _{L^{1}(\Omega ;\left\vert
x\right\vert ^{l}dx)}  \label{L^1}
\end{equation}
and 
\begin{equation}
\left\Vert v\right\Vert _{L^{2}(\Omega ^{\sharp };\left\vert x\right\vert
^{l}dx)}=\int_{\Omega ^{\sharp }}v^{2}(x)\left\vert x\right\vert ^{l}dx\geq
\int_{\Omega }u^{2}(x)\left\vert x\right\vert ^{l}dx=\left\Vert u\right\Vert
_{L^{2}(\Omega ;\left\vert x\right\vert ^{l}dx)}.  \label{L^2}
\end{equation}
\end{theorem}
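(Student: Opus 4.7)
The plan is to perform a Talenti-type symmetrization adapted both to the Robin boundary condition and to the weighted ambient measure $d\mu_l := |x|^l\,dx$, in the spirit of \cite{ANT,ACNT}. Throughout, denote by $\mu(t) := \int_{\{u>t\}}|x|^l\,dx$ the weighted distribution function of $u$ and by $u^{\sharp}$ its weighted radially-decreasing rearrangement on $\Omega^{\sharp}$, so that $u$ and $u^{\sharp}$ are equimeasurable with respect to $d\mu_l$. Once a pointwise comparison $u^{\sharp}(x) \le v(x)$ on $\Omega^{\sharp}$ is established, both (\ref{L^1}) and (\ref{L^2}) follow at once by raising to the relevant power, integrating against $|x|^l\,dx$, and using monotonicity of $t \mapsto t^p$.

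The analytic core is to plug $\psi = (u-t)_+$ into the weak formulation (\ref{Weak_Form}) and differentiate in $t$, obtaining for a.e.\ $t>0$
\[
\int_{\Omega\cap\{u=t\}} |\nabla u|\,d\mathcal{H}^1 \;+\; \beta\int_{\partial\Omega\cap\{u>t\}} u\,|x|^{l/2}\,d\mathcal{H}^1 \;=\; \int_{\{u>t\}} f\,|x|^l\,dx.
\]
Combining this identity with the coarea formula $-\mu'(t) = \int_{\Omega\cap\{u=t\}} |x|^l/|\nabla u|\,d\mathcal{H}^1$, the Cauchy--Schwarz inequality applied to the splitting $|x|^l = |x|^{l/2}\cdot|x|^{l/2}$, and the two-weight isoperimetric inequality announced in the Introduction (perimeter weighted by $|x|^{l/2}$ and area weighted by $|x|^l$, with disks centered at the origin as extremal sets), one produces a first-order differential inequality for the one-dimensional weighted rearrangement of $u$. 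For the radial, radially decreasing solution $v$ of (\ref{P_sharp}) the very same calculation saturates every inequality and becomes an identity, so a standard ODE comparison on $[0,|\Omega|_l]$ yields $u^{\sharp} \le v$ pointwise on $\Omega^{\sharp}$.

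The principal obstacle is the Robin boundary term: in this setting level sets of $u$ routinely touch $\partial\Omega$, and the integral $\beta\int_{\partial\Omega\cap\{u>t\}} u\,|x|^{l/2}\,d\mathcal{H}^1$ cannot simply be discarded as in the Dirichlet case. The strategy is to keep the interior perimeter piece $\int_{\Omega\cap\{u=t\}} |x|^{l/2}\,d\mathcal{H}^1$ and the boundary piece $\int_{\partial\Omega\cap\{u>t\}} |x|^{l/2}\,d\mathcal{H}^1$ bundled together and estimate them jointly by the two-weight isoperimetric inequality; the choice of weight $|x|^{l/2}$ in the Robin condition, precisely the square root of the bulk weight $|x|^l$, is what makes all perimeter-type integrals dimensionally homogeneous and matches the constant coefficient $\beta(r^{\sharp})^{l/2}$ appearing on $\partial\Omega^{\sharp}$ in the symmetrized problem. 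Without this exact matching of weights the comparison would collapse, so the delicate point in writing out the proof is to verify that the Robin contributions on the two sides combine in a way consistent with the saturation on $\Omega^{\sharp}$.
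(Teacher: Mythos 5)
There is a genuine gap, and it sits exactly at the step you call ``a standard ODE comparison \ldots yields $u^{\sharp}\le v$ pointwise.'' The pointwise comparison $u^{\sharp}\le v$ is \emph{not} available for general nonnegative $f\in L^{2}(\Omega;|x|^{l}dx)$: in this paper it is stated only for $f\equiv 1$ (Theorem \ref{f(x)=1}), and in the unweighted Robin setting of \cite{ANT} it is known to fail for general data. The reason your ODE comparison does not close is visible in the differential inequality you would actually obtain, namely (\ref{Prima_Fubini}):
\begin{equation*}
C(l)\,\mu (t)\leq \left( -\mu ^{\prime }(t)+\frac{1}{\beta }\int_{\partial
U_{t}^{\text{ext}}}\dfrac{\left\vert x\right\vert ^{l/2}}{u(x)}\,d\mathcal{H}
^{1}\right) \left( \int_{0}^{\mu (t)}f^{\ast }(s)\,ds\right) .
\end{equation*}
The boundary term $\frac{1}{\beta }\int_{\partial U_{t}^{\text{ext}}}\frac{|x|^{l/2}}{u}\,d\mathcal{H}^{1}$ has the \emph{wrong sign} for your purposes (it weakens the inequality) and admits no pointwise-in-$t$ bound that matches the corresponding term in the identity (\ref{equality}) satisfied by $v$. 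Bundling interior and exterior perimeter pieces into the isoperimetric inequality, as you propose, is indeed how (\ref{Prima_Fubini}) is derived, but after that step you are left with this extra term and no way to compare $-\mu'(t)$ with $-\phi'(t)$ level by level.

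The paper's actual mechanism is to control this term only in an \emph{integrated} sense: by Fubini (Lemma \ref{Lemma_Dopo_Fubini}), $\int_{0}^{\infty }t\left( \int_{\partial U_{t}^{\text{ext}}}\frac{|x|^{l/2}}{u}\,d\mathcal{H}^{1}\right) dt\leq \frac{1}{2\beta }\int_{0}^{|\Omega |_{l}}f^{\ast }(s)\,ds$, which follows from the Robin condition and the equation itself. One then multiplies (\ref{Prima_Fubini}) by $t$ (for the $L^{1}$ bound) or by $tF(\mu (t))/\mu (t)$ (for the $L^{2}$ bound), integrates over $(0,\tau )$ with $\tau \geq v_{m}$ (the inequality $u_{m}\leq v_{m}$ of Lemma \ref{u_m<v_m} is needed to anchor the argument), and applies the Gronwall-type Lemma \ref{Gronwall}. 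The output is an inequality between integrals of $\mu$ and $\phi$, not between $\mu$ and $\phi$ themselves, and letting $\tau \to \infty$ gives precisely (\ref{L^1}) and (\ref{L^2}). If you want to salvage your write-up, you must abandon the pointwise reduction and carry out this integrated Gronwall scheme; as written, the proposal proves a statement stronger than the theorem and stronger than what is true.
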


\vspace{.5cm}

Theorem above can be significantly improved when the datum $f$ is constant.
More precisely the following holds true

\vspace{.5cm}

\begin{theorem}
\label{f(x)=1}If $f(x)=1$ a.e. in $\Omega $ then 
\begin{equation}
u^{\sharp }(x)\leq v(x)\text{ \ }x\in \Omega ^{\sharp }.  \label{f=1}
\end{equation}
\end{theorem}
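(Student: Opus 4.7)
The plan is to adapt the Talenti-type symmetrization scheme to the weighted Robin setting, exploiting the hypothesis $f\equiv 1$ to obtain a pointwise (rather than only norm-type) comparison. Setting $\mu_u(t):=|\{u>t\}|_l$, the conclusion $u^\sharp\le v$ on $\Omega^\sharp$ is equivalent to $\mu_u(t)\le\mu_v(t)$ for every $t\ge 0$. By the Hopf lemma applied to the Robin condition one has $u,v>0$ in the closures, so $\mu_u(0)=|\Omega|_l=|\Omega^\sharp|_l=\mu_v(0)$, and it suffices to establish the differential inequality $-\mu_u'(t)\ge -\mu_v'(t)$ a.e. An explicit computation on the radial solution $v$ yields $-\mu_v'(t)=2\pi(l+2)$ throughout the range of $v$ on the interior of $\Omega^\sharp$.

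First I would test the weak formulation \eqref{Weak_Form} against $\psi_h(x):=\min\bigl(1,(u-t)_+/h\bigr)$, let $h\to 0^+$, and apply the weighted coarea formula to obtain the key identity
\begin{equation*}
\int_{\{u=t\}}|\nabla u|\,d\mathcal{H}^1+\beta\int_{\partial\Omega\cap\{u>t\}}u\,|x|^{l/2}\,d\mathcal{H}^1=\mu_u(t),
\end{equation*}
where on the right the assumption $f\equiv 1$ has been used. The reduced boundary $\partial^*\{u>t\}$ splits into the interior level-set piece $\{u=t\}\cap\Omega$ and the boundary piece $\partial\Omega\cap\{u>t\}$. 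A Cauchy--Schwarz inequality on this disjoint union, with weights $|\nabla u|$ on the first piece and $\beta u\,|x|^{l/2}$ on the second, calibrated so that the identity above furnishes the $\int\psi^2$ factor, together with the coarea identity $\int_{\{u=t\}}|x|^l/|\nabla u|\,d\mathcal{H}^1=-\mu_u'(t)$, gives
\begin{equation*}
\biggl(\int_{\partial^*\{u>t\}}|x|^{l/2}\,d\mathcal{H}^1\biggr)^{2}\le\mu_u(t)\biggl[-\mu_u'(t)+\int_{\partial\Omega\cap\{u>t\}}\frac{|x|^{l/2}}{\beta u}\,d\mathcal{H}^1\biggr].
\end{equation*}

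Combining this with the sharp weighted isoperimetric inequality underlying the paper,
\begin{equation*}
\biggl(\int_{\partial^*E}|x|^{l/2}\,d\mathcal{H}^1\biggr)^{2}\ge 2\pi(l+2)\,|E|_l,
\end{equation*}
applied to $E=\{u>t\}$, and using the pointwise bound $u>t$ on $\partial\Omega\cap\{u>t\}$ to control the Robin correction, one reaches the differential inequality $-\mu_u'(t)\ge 2\pi(l+2)=-\mu_v'(t)$ a.e. Integrating from $0$ to $t$ and using $\mu_u(0)=\mu_v(0)$ then yields $\mu_u(t)\le\mu_v(t)$ for all $t$, which is equivalent to $u^\sharp\le v$ on $\Omega^\sharp$.

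The main obstacle is the absorption of the residual boundary term $\int_{\partial\Omega\cap\{u>t\}}|x|^{l/2}/(\beta u)\,d\mathcal{H}^1$ into the sharp differential inequality: a crude use of $u>t$ produces an unwanted contribution of order $(\beta t)^{-1}\int_{\partial\Omega\cap\{u>t\}}|x|^{l/2}\,d\mathcal{H}^1$. The fix, and the structural reason $f\equiv 1$ enables a pointwise conclusion, is an additional Cauchy--Schwarz pairing this residual with the Robin flux $\beta\int u|x|^{l/2}\,d\mathcal{H}^1$ already present in the identity, producing $\bigl(\int_{\partial\Omega\cap\{u>t\}}|x|^{l/2}\,d\mathcal{H}^1\bigr)^2\le\bigl(\int |x|^{l/2}/(\beta u)\,d\mathcal{H}^1\bigr)\cdot\bigl(\beta\int u\,|x|^{l/2}\,d\mathcal{H}^1\bigr)$, which together with the identity closes the algebraic system and restores the sharp constant $2\pi(l+2)$. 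For nonconstant $f$ this closure fails, which is why Theorem \ref{L^1_L^2} yields only norm-type comparisons.
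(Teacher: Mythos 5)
Your setup coincides with the paper's own preparation: the test functions $\psi_h$, the limit identity
\begin{equation*}
\int_{\{u=t\}}|\nabla u|\,d\mathcal{H}^1+\beta\int_{\partial\Omega\cap\{u>t\}}u\,|x|^{l/2}\,d\mathcal{H}^1=\mu(t),
\end{equation*}
the split Cauchy--Schwarz inequality and the weighted isoperimetric inequality together give exactly the paper's relation $2\pi(l+2)\le-\mu'(t)+\frac{1}{\beta}\int_{\partial\Omega\cap\{u>t\}}\frac{|x|^{l/2}}{u}\,d\mathcal{H}^1$. The gap is in the very next step. The residual boundary term sits on the \emph{same} side as $-\mu'(t)$ and is nonnegative, so it weakens rather than strengthens the lower bound on $-\mu'(t)$: to conclude $-\mu'(t)\ge2\pi(l+2)$ you would need that term to vanish, and no pointwise bound such as $u>t$ can achieve that. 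Your proposed fix, the Cauchy--Schwarz inequality $\bigl(\int|x|^{l/2}\bigr)^2\le\bigl(\int\frac{|x|^{l/2}}{\beta u}\bigr)\bigl(\beta\int u|x|^{l/2}\bigr)$, runs in the wrong direction: it bounds the residual term from \emph{below} by $P_{\mathrm{ext}}(t)^2/S(t)$, where $S(t)$ is the Robin flux, so it cannot absorb it. Indeed the algebraic system formed by the level-set identity, the two Cauchy--Schwarz inequalities and the isoperimetric inequality is consistent with $-\mu'(t)=0$ (take the interior part of $\partial U_t$ empty, which actually occurs for $t<u_m$), so no algebraic closure of this kind can yield the pointwise inequality $-\mu'(t)\ge2\pi(l+2)$. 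This is precisely the obstruction caused by Robin conditions: level sets meet $\partial\Omega$ and the pointwise Talenti scheme does not close.

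The paper circumvents this with an integrated argument rather than a pointwise one: multiply the differential inequality by $t$, integrate over $(0,\tau)$ for $\tau\ge v_m$, and observe (Lemma \ref{Lemma_Dopo_Fubini}, a Fubini computation combined with the equation itself) that the \emph{total} contribution of the boundary terms over all levels is at most $\frac{1}{2\beta}|\Omega|_l$ --- the same constant that occurs with equality for the radial solution $v$. Subtracting the resulting inequality for $u$ from the corresponding identity for $v$ gives $\int_0^\tau(-\phi'(\sigma))\sigma\,d\sigma\le\int_0^\tau(-\mu'(\sigma))\sigma\,d\sigma$ for $\tau\ge v_m$, and an integration by parts, together with $\phi\equiv|\Omega|_l$ on $[0,v_m]$ (which rests on Lemma \ref{u_m<v_m}), yields $\mu(\tau)\le\phi(\tau)$ for all $\tau$. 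To repair your argument, replace the pointwise comparison of $-\mu'$ with $-\phi'$ by this comparison of the weighted integrals $\int_0^\tau t(-\mu'(t))\,dt$.
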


\vspace{.5cm}

Let $\lambda _{1,l}(\Omega )$ and $\lambda _{1,l}(\Omega ^{\sharp })$ be the
first eigenvalues of the problems 
\begin{equation}
\left\{ 
\begin{array}{cc}
-\Delta u=\lambda (\Omega )\left\vert x\right\vert ^{l}u & \text{in \ }
\Omega  \\ 
&  \\ 
\dfrac{\partial u}{\partial \nu }+\beta \left\vert x\right\vert ^{l/2}u=0 & 
\text{on \ }\partial \Omega 
\end{array}
\right. 
\end{equation}
and 
\begin{equation}
\left\{ 
\begin{array}{cc}
-\Delta v
=
\lambda ( \Omega ^{\sharp } )
\left\vert x\right\vert ^{l}v & \text{in \ }
\Omega ^{\sharp } \\ 
&  \\ 
\dfrac{\partial u}{\partial \nu }+\beta \left( r^{\sharp }\right) ^{l/2}v=0 & 
\text{on \ }\partial \Omega ^{\sharp },
\end{array}
\right. 
\end{equation}
respectively. Then the following Faber-Krahn type inequality holds true.

\vspace{.5cm}

\begin{theorem}
\label{FK}It holds that 
\begin{equation}
\lambda _{1,l}(\Omega )\geq \lambda _{1,l}(\Omega ^{\sharp }).  \label{F_K}
\end{equation}
\end{theorem}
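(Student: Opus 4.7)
The plan is to obtain the Faber--Krahn inequality by using the solution of the symmetrized source problem as a test function in the Rayleigh quotient defining $\lambda_{1,l}(\Omega^\sharp)$. The variational characterization is
\[
\lambda_{1,l}(\Omega)=\inf_{\psi\in H^1(\Omega)\setminus\{0\}}\frac{\displaystyle\int_\Omega|\nabla\psi|^2\,dx+\beta\int_{\partial\Omega}\psi^2|x|^{l/2}\,d\mathcal{H}^1}{\displaystyle\int_\Omega\psi^2|x|^l\,dx},
\]
and an analogous formula (with $(r^\sharp)^{l/2}$ in place of $|x|^{l/2}$ on the boundary) for $\lambda_{1,l}(\Omega^\sharp)$.

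First, let $u_1\ge 0$ be a first eigenfunction on $\Omega$, normalized so that $\int_\Omega u_1^2|x|^l\,dx=1$; then $\lambda_1:=\lambda_{1,l}(\Omega)$ equals the numerator of its Rayleigh quotient. View $u_1$ as the weak solution to problem (\ref{P}) with datum $f=\lambda_1 u_1$. Since the weighted rearrangement commutes with multiplication by a positive constant, $f^\sharp=\lambda_1 u_1^\sharp$, so the symmetrized problem (\ref{P_sharp}) associated to this datum has $v$ as solution, with
\[
-\Delta v=\lambda_1 u_1^\sharp |x|^l\text{ in }\Omega^\sharp,\qquad \frac{\partial v}{\partial\nu}+\beta(r^\sharp)^{l/2}v=0\text{ on }\partial\Omega^\sharp.
\]

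Next, apply Theorem \ref{L^1_L^2} to get the $L^2$--comparison
\[
\int_{\Omega^\sharp} v^2|x|^l\,dx\ \ge\ \int_\Omega u_1^2|x|^l\,dx\ =\ 1.
\]
Using $v$ itself as a test function in the weak formulation (\ref{Weak_Form}) of its own equation, and then the Cauchy--Schwarz inequality together with the equimeasurability identity $\int_{\Omega^\sharp}(u_1^\sharp)^2|x|^l\,dx=\int_\Omega u_1^2|x|^l\,dx=1$, yields
\[
\int_{\Omega^\sharp}|\nabla v|^2\,dx+\beta(r^\sharp)^{l/2}\int_{\partial\Omega^\sharp}v^2\,d\mathcal{H}^1=\lambda_1\!\int_{\Omega^\sharp}u_1^\sharp v|x|^l\,dx\le \lambda_1\left(\int_{\Omega^\sharp}v^2|x|^l\,dx\right)^{\!1/2}.
\]

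Finally, dividing by $\int_{\Omega^\sharp}v^2|x|^l\,dx$ and using $\int_{\Omega^\sharp}v^2|x|^l\,dx\ge 1$ gives
\[
\lambda_{1,l}(\Omega^\sharp)\ \le\ \frac{\lambda_1}{\left(\int_{\Omega^\sharp}v^2|x|^l\,dx\right)^{1/2}}\ \le\ \lambda_1\ =\ \lambda_{1,l}(\Omega),
\]
which is the claim. The only substantive step is the $L^2$ comparison already granted by Theorem \ref{L^1_L^2}; the rest is a routine Rayleigh-quotient manipulation, so no real obstacle is expected. The main thing to be careful about is to treat the weighted rearrangement correctly so that $(\lambda_1 u_1)^\sharp=\lambda_1 u_1^\sharp$ and so that $\int_{\Omega^\sharp}(u_1^\sharp)^2|x|^l\,dx=\int_\Omega u_1^2|x|^l\,dx$ holds in the weighted sense used throughout the paper.
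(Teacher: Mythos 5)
Your proposal is correct and follows essentially the same route as the paper: take the first eigenfunction $u_1$ as the datum $f=\lambda_{1,l}(\Omega)u_1$, symmetrize, invoke the $L^2$-comparison of Theorem \ref{L^1_L^2}, and conclude via Cauchy--Schwarz and the Rayleigh quotient on $\Omega^{\sharp}$. The only cosmetic difference is your normalization $\int_{\Omega}u_1^2|x|^l\,dx=1$, which reorganizes (but does not change) the final chain of inequalities.
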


\vspace{.5cm}

\noindent The paper is organized as follows. In Section 2 we give the basic
definitions and results about the rearrangement with respect the measure $
\left\vert x\right\vert ^{l}dx$. We also recall the weighted isoperimetric
inequality on which such a rearrangement relies. Section 3 and 4 contain the
proofs of Theorems \ref{L^1_L^2} and \ref{f(x)=1}, respectively. The last
section is devoted to the proof of Theorem \ref{FK}. There we include
further comments on the relation between the spaces $H^{1}(\Omega )$ and $
L^{2}\left( \Omega ;\left\vert x\right\vert ^{l}dx\right) $. Note that, to
the best of our knowledge, it is the first time that a Faber-Krahn
inequality has been established when the Robin parameter is a function.

\begin{remark}
Since $0 \notin \partial\Omega$, our results still hold true if one assumes that the Robin parameter is a  function $ \beta(x) \in L^{\infty}(\partial \Omega)$ such that for some positive constant $C$ it holds 
$$
\beta(x) >C   \quad {\rm on } \quad \partial \Omega.
$$
Under this assumption the proofs are conceptually identical they just turn out to be  more cumbersome.
\end{remark}

\section{Preliminary results}

Let $\Omega$ be a Lebesgue measurable subset of $\mathbb{R}^{2}$ and let $l\in
(-2,0].$ Define 
\begin{equation*}
\left\vert \Omega\right\vert _{l}=\int_{\Omega}\left\vert x\right\vert ^{l}dx
\end{equation*}
and 
\begin{equation*}
P_{\frac{l}{2}}(\Omega)=\left\{ 
\begin{array}{cc}
\displaystyle\int_{\partial \Omega }\left\vert x\right\vert ^{\frac{l}{2}}d 
\mathcal{H}^{1} & \text{if }\Omega\text{ is }1-\text{rectifiable} \\ 
&  \\ 
+\infty & \text{otherwise.}
\end{array}
\right.
\end{equation*}
In the sequel with $D_{\rho }$ we will denote the disk centered at the
origin of radius $\rho $.

The following result is a particular case of a\ two-parameter family of
isoperimetric inequalities (see, e.g., \cite{ABCMP, ChiHo, DHHT} and the
references therein).

\vspace{0.5cm}

\begin{theorem}
It holds that 
\begin{equation*}
P_{\frac{l}{2}}(\Omega)\geq P_{\frac{l}{2}}(\Omega^{\sharp }),
\end{equation*}
where $\Omega^{\sharp }=D_{r^{\sharp }}$ with $r^{\sharp }>0:$ 
\begin{equation*}
\left\vert \Omega\right\vert _{l}=\left\vert \Omega^{\sharp }\right\vert _{l}.
\end{equation*}
\end{theorem}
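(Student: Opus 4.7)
The plan is to reduce the weighted isoperimetric inequality to the classical Euclidean one in $\mathbb{R}^2$ via the radial change of variables $\Phi:\mathbb{R}^2\to\mathbb{R}^2$ defined, in polar coordinates, by $(r,\theta)\mapsto(s,\theta)=(r^{\alpha},\theta)$, where $\alpha=(l+2)/2\in(0,1]$ since $l\in(-2,0]$. I would first verify that $\Phi$ pushes the weighted area forward onto a multiple of standard Lebesgue measure: since $|x|^{l}\,dx=r^{l+1}\,dr\,d\theta$ and $s\,ds\,d\theta=\alpha r^{2\alpha-1}\,dr\,d\theta=\alpha r^{l+1}\,dr\,d\theta$, one gets
$$
|x|^{l}\,dx=\tfrac{1}{\alpha}\,s\,ds\,d\theta,\qquad\text{hence}\qquad |\Omega|_{l}=\tfrac{1}{\alpha}\,|\Phi(\Omega)|,
$$
where $|\cdot|$ denotes Lebesgue measure in the target plane.

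Next I would compute how the weighted perimeter transforms. Parametrizing a piece of $\partial\Omega$ in polar form $(r(t),\theta(t))$, the relations $s=r^{\alpha}$, $\dot s=\alpha r^{\alpha-1}\dot r$ and $l/2=\alpha-1$ yield
$$
|x|^{l/2}\,d\mathcal H^{1}=r^{\alpha-1}\sqrt{\dot r^{2}+r^{2}\dot\theta^{2}}\,dt,\qquad d\tilde{\mathcal H}^{1}=r^{\alpha-1}\sqrt{\alpha^{2}\dot r^{2}+r^{2}\dot\theta^{2}}\,dt,
$$
where $\tilde{\mathcal H}^{1}$ is standard one-dimensional Hausdorff measure in the target plane. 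Because $\alpha\le 1$, pointwise along the curve $d\tilde{\mathcal H}^{1}\le |x|^{l/2}\,d\mathcal H^{1}$, so
$$
P_{l/2}(\Omega)\ \ge\ \mathcal H^{1}(\partial \Phi(\Omega)).
$$
Applying the classical isoperimetric inequality to $\Phi(\Omega)\subset\mathbb R^{2}$ then gives $\mathcal H^{1}(\partial\Phi(\Omega))\ge 2\sqrt{\pi|\Phi(\Omega)|}=2\sqrt{\pi\alpha\,|\Omega|_{l}}$.

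To close the argument I would verify that both estimates are saturated on $\Omega^{\sharp}=D_{r^{\sharp}}$. On its boundary $\dot r\equiv 0$, which turns the pointwise inequality into an equality; moreover $\Phi(D_{r^{\sharp}})=D_{(r^{\sharp})^{\alpha}}$ is itself a disk, so the classical isoperimetric inequality is also an equality. Combined with $|\Omega|_{l}=|\Omega^{\sharp}|_{l}$, this yields
$$
P_{l/2}(\Omega)\ \ge\ 2\sqrt{\pi\alpha\,|\Omega|_{l}}\ =\ 2\sqrt{\pi\alpha\,|\Omega^{\sharp}|_{l}}\ =\ P_{l/2}(\Omega^{\sharp}).
$$

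The main technical obstacle will be justifying the formal change of variable for a general Lebesgue measurable $\Omega$. Sets which are not $1$-rectifiable make the left-hand side infinite by definition, so the inequality is trivial; for rectifiable $\Omega$ one needs the area and coarea formulas near the singularity at $0$ of $\Phi$. This can be handled by approximation, removing a small disk around the origin and letting its radius shrink: since $l>-2$, the weighted area of small disks tends to $0$, and the contribution of their boundary to $P_{l/2}$ vanishes as well, so the inequality passes to the limit.
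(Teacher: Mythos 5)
Your argument is correct, but note that the paper does not actually prove this theorem: it is quoted as a particular case of a known two-parameter family of weighted isoperimetric inequalities, with the proof delegated to the references \cite{ABCMP, ChiHo, DHHT}. What you supply is therefore a self-contained proof rather than a variant of the paper's. Your radial stretching $\Phi:(r,\theta)\mapsto(r^{\alpha},\theta)$ with $\alpha=(l+2)/2$ is well chosen: the computations $|x|^{l}\,dx=\alpha^{-1}s\,ds\,d\theta$ and $d\tilde{\mathcal H}^{1}=r^{\alpha-1}\sqrt{\alpha^{2}\dot r^{2}+r^{2}\dot\theta^{2}}\,dt\leq r^{\alpha-1}\sqrt{\dot r^{2}+r^{2}\dot\theta^{2}}\,dt=|x|^{l/2}\,d\mathcal H^{1}$ check out, the squared form $4\pi\alpha|\Omega|_{l}=2\pi(l+2)|\Omega|_{l}$ matches \eqref{Is_Ineq} exactly, and both inequalities are saturated on centred disks, so the constant is sharp. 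The argument also makes transparent where the two structural hypotheses enter: the exponent $l/2$ on the perimeter versus $l$ on the area is precisely what makes the area element push forward to (a multiple of) Lebesgue measure while the length element is controlled, and $l\leq 0$ (i.e.\ $\alpha\leq 1$) is exactly what gives the right sign in the pointwise comparison of length elements --- for $l>0$ the radial term would go the wrong way, consistent with the known symmetry breaking in that regime. Two small points you should make explicit when writing this up: you need $\partial\Phi(\Omega)=\Phi(\partial\Omega)$, which holds because $\Phi$ is a homeomorphism of $\mathbb{R}^{2}$ (continuous with continuous inverse, including at the origin), and the classical isoperimetric inequality should be invoked in the form $\mathcal H^{1}(\partial E)\geq 2\sqrt{\pi|E|}$ for bounded measurable $E$ (using that the distributional perimeter is dominated by $\mathcal H^{1}$ of the topological boundary). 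Your excision of a small disk $D_{\varepsilon}$ to handle the failure of Lipschitz regularity of $\Phi$ at the origin is the right device, and the error terms $2\pi\varepsilon^{(l+2)/2}$ and $\frac{2\pi}{l+2}\varepsilon^{l+2}$ indeed vanish because $l>-2$.
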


\vspace{0.5cm}

\begin{remark}
Note that the isoperimetric inequality above can be written equivalently as
follows 
\begin{equation}
P_{\frac{l}{2}}^{2}(\Omega)\geq 2\pi \left( l+2\right) \left\vert \Omega\right\vert
_{l}.  \label{Is_Ineq}
\end{equation}
In fact an elementary computation shows that 
\begin{equation*}
P_{\frac{l}{2}}(\Omega^{\sharp })=2\pi \left( r^{\sharp }\right) ^{\frac{l+2}{2}}
\end{equation*}
and 
\begin{equation*}
\left\vert \Omega\right\vert _{l}=\left\vert \Omega^{\sharp }\right\vert
_{l}=\int_{\Omega ^{\sharp }}\left\vert x\right\vert ^{l}dx=\frac{2\pi }{l+2}
\left( r^{\sharp }\right) ^{l+2}
\end{equation*}
Recalling the definition of $r^{\sharp }$ we have 
\begin{equation*}
r^{\sharp }=\left[ \frac{l+2}{2\pi }\left\vert \Omega \right\vert _{l}\right]
^{\frac{1}{l+2}}.
\end{equation*}
Finally we deduce that 
\begin{equation*}
P_{\frac{l}{2}}(\Omega)\geq P_{\frac{l}{2}}(\Omega^{\sharp })=2\pi \left[ \frac{l+2}{
2\pi }\left\vert \Omega\right\vert _{l}\right] ^{\frac{1}{2}}
\end{equation*}
and, hence, the claim.
\end{remark}

\vspace{0.5cm}

Starting from this isoperimetric inequality one can consider the
corresponding weighted rearrangement of a function. For further reading on
this topic the reader can consult, for instance, \cite{Ka}, \cite{Ke}, \cite
{Ta2} and the references therein.

Let $u:\Omega \rightarrow \mathbb{R}$ be a measurable function.

\vspace{0.5cm}

\begin{definition}
The distribution function $\mu :t\in \left[ 0,\infty \right) \rightarrow 
\left[ 0,\infty \right) $ of $u$ is defined as 
\begin{equation*}
\mu (t)=\left\vert \left\{ x\in \Omega :\left\vert u(x)\right\vert
>t\right\} \right\vert _{l}.
\end{equation*}
\end{definition}

\begin{definition}
The decreasing rearrangement $u^{\ast }:s\in \left[ 0,\left\vert \Omega
\right\vert _{l}\right] \rightarrow \left[ 0,\infty \right] $ of $u$ is
defined as 
\begin{equation*}
u^{\ast }\left( s\right) =\inf \left\{ t\geq 0:\mu (t)<s\right\} .
\end{equation*}
\end{definition}

\begin{definition}
The weighted Schwarz symmetrization $u^{\sharp }(x):x\in \Omega ^{\sharp
}\rightarrow \left[ 0,\infty \right] $ of $u$ is defined as 
\begin{equation}
u^{\sharp }(x)=u^{\ast }\left( \left\vert D_{\left\vert x\right\vert
}\right\vert _{l}\right) =u^{\ast }\left( \frac{2\pi }{l+2}\left\vert
x\right\vert ^{l+2}\right) .  \label{u_star}
\end{equation}
\end{definition}

Equivalently one can say that $u^{\sharp }$ is the unique radial and
radially non-increasing function such that 
\begin{equation*}
\left\vert \left\{ x\in \Omega :\left\vert u(x)\right\vert >t\right\}
\right\vert _{l}=\left\vert \left\{ x\in \Omega :u^{\sharp }(x)>t\right\}
\right\vert _{l}\text{ \ for any }t\geq 0.
\end{equation*}

\vspace{0.5cm}

\begin{definition}
If $p\in \left[ 1,+\infty \right) $, we will denote by $L^{p}(\Omega
,\left\vert x\right\vert ^{l}dx)$ the space of all Lebesgue measurable real
valued functions $u$ such that 
\begin{equation}
\left\Vert u\right\Vert _{L^{p}(\Omega ,\left\vert x\right\vert ^{l}dx)} :=
\left( \int_{\Omega }\left\vert u\right\vert ^{p}\left\vert x\right\vert
^{l}dx\right) ^{1/p} <+\infty .  \label{Norm_Lp}
\end{equation}
\end{definition}

Note that since, by construction, $u$, $u^{\ast }$ and $u^{\sharp }$ are
equimeasurable we have that 
\begin{equation*}
\int_{\Omega }\left\vert u\right\vert ^{p}\left\vert x\right\vert ^{l}dx 
=
\int_{\Omega ^{\sharp }} \left( u^{\sharp }\right)^{p} \left\vert
x\right\vert ^{l}dx 
= \int_{0}^{\left\vert \Omega \right\vert _{l}} \left(
u^{\ast } \right)^{p} ds \text{ \ for any }p\geq 1.
\end{equation*}

We will need in the sequel the following well-known result (see, e.g., \cite
{CR}, \cite{Ka} and \cite{Ke}).

\vspace{0.5cm}

\begin{proposition}
Let $u\in L^{1}(\Omega ,\left\vert x\right\vert ^{l}dx)$ be non negative
function and let $E\subseteq \Omega $ be a measurable set. Then we have 
\begin{equation*}
\int_{E}u(x)\left\vert x\right\vert ^{l}dx \leq \int_{0}^{\left\vert
E\right\vert _{l}}u^{\ast }\left( s\right) ds.
\end{equation*}
\end{proposition}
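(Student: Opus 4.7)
The plan is to prove this weighted Hardy--Littlewood inequality by the classical layer-cake (Cavalieri) argument, adapted to the measure $|x|^{l}dx$. The only input one needs beyond basic measure theory is the equimeasurability between $u$ (with respect to $|x|^{l}dx$) and its decreasing rearrangement $u^{\ast}$ (with respect to $1$-dimensional Lebesgue measure), which is built into the definitions given in the previous page.

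First, I would rewrite the left-hand side using the layer-cake formula with respect to the weighted measure. Since $u \geq 0$,
\begin{equation*}
\int_{E} u(x)\,|x|^{l}\,dx \;=\; \int_{0}^{\infty} |\{x \in E : u(x) > t\}|_{l}\, dt.
\end{equation*}
Then I would bound the integrand in two obvious ways: the sublevel truncation by $E$ gives $|\{x \in E : u(x) > t\}|_{l} \leq |E|_{l}$, while forgetting the intersection with $E$ gives $|\{x \in E : u(x) > t\}|_{l} \leq \mu(t)$. Hence
\begin{equation*}
\int_{E} u(x)\,|x|^{l}\,dx \;\leq\; \int_{0}^{\infty} \min\bigl(|E|_{l},\mu(t)\bigr)\, dt.
\end{equation*}

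Second, I would recognize the right-hand side of the proposition as exactly this last integral. By the definition $u^{\ast}(s) = \inf\{t \geq 0 : \mu(t) < s\}$, the one-dimensional distribution function of $u^{\ast}$ coincides with $\mu$, so $|\{s \in [0,|\Omega|_{l}] : u^{\ast}(s) > t\}| = \mu(t)$. Consequently $|\{s \in [0,|E|_{l}] : u^{\ast}(s) > t\}| = \min(|E|_{l},\mu(t))$, and a further application of layer-cake (this time on the line) gives
\begin{equation*}
\int_{0}^{|E|_{l}} u^{\ast}(s)\, ds \;=\; \int_{0}^{\infty} \min\bigl(|E|_{l},\mu(t)\bigr)\, dt.
\end{equation*}
Comparing the two displays finishes the proof.

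There is no serious obstacle here: the isoperimetric inequality and the weighted rearrangement play no role, because the argument uses only the equimeasurability of $u$ with $u^{\ast}$ through the common distribution function $\mu(t)$. The mildly delicate point is simply to be careful that the measure under which $u^{\ast}$ is equimeasurable with $u$ is the weighted one $|x|^{l}dx$, while on the target side $u^{\ast}$ lives on $[0,|\Omega|_{l}]$ with ordinary Lebesgue measure; once this bookkeeping is clear the inequality $\min(|E|_{l},\mu(t)) \geq |\{x \in E : u(x) > t\}|_{l}$ yields the result in one line.
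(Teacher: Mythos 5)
Your argument is correct: the layer-cake representation of both sides, combined with the elementary bound $|\{x\in E: u(x)>t\}|_{l}\leq\min\bigl(|E|_{l},\mu(t)\bigr)$ and the equimeasurability $|\{s: u^{\ast}(s)>t\}|=\mu(t)$, is precisely the standard proof of this Hardy--Littlewood-type inequality. The paper does not prove this proposition at all --- it states it as well known and refers to \cite{CR}, \cite{Ka} and \cite{Ke} --- and your argument is essentially the one found in those references, so there is nothing to add beyond noting that the right-continuity of $\mu$ is what makes $\{s:u^{\ast}(s)>t\}$ an interval of length exactly $\mu(t)$, which you implicitly use.
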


We end this section by recalling the following version of Gronwall's Lemma

\vspace{0.5cm}

\begin{lemma}
\label{Gronwall}Let $\xi (\tau )$ be a continuously differentiable function,
satisfying, for some constant $C\geq 0$ the following differential
inequality 
\begin{equation*}
\tau \xi ^{\prime }(\tau )\leq \xi (\tau )+C\text{ \ for all }\tau \geq \tau
_{0}>0.
\end{equation*}
Then 
\begin{equation}
\xi (\tau )\leq \tau \frac{\xi (\tau _{0})+C}{\tau _{0}}-C\text{ \ for all }%
\tau \geq \tau _{0}\text{ }  \label{G1}
\end{equation}
and 
\begin{equation}
\xi ^{\prime }(\tau )\leq \frac{\xi (\tau _{0})+C}{\tau _{0}}\text{ \ for
all }\tau \geq \tau _{0}\text{.}  \label{G2}
\end{equation}
\end{lemma}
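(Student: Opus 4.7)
The plan is to reduce the differential inequality to a monotonicity statement by introducing the auxiliary quotient $h(\tau):=(\xi(\tau)+C)/\tau$, and then to read off both inequalities from the monotonicity of $h$.

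First I would compute $h'(\tau)$ directly:
$$h'(\tau)=\frac{\tau\xi'(\tau)-(\xi(\tau)+C)}{\tau^{2}}.$$
The hypothesis $\tau\xi'(\tau)\le \xi(\tau)+C$ is exactly the statement that the numerator is non-positive, so $h'(\tau)\le 0$ for $\tau\ge\tau_{0}$. Hence $h$ is non-increasing on $[\tau_{0},+\infty)$, which gives
$$\frac{\xi(\tau)+C}{\tau}\le \frac{\xi(\tau_{0})+C}{\tau_{0}}\qquad\text{for all }\tau\ge\tau_{0}.$$
Multiplying by $\tau>0$ and subtracting $C$ yields \eqref{G1}.

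For \eqref{G2} I would simply feed \eqref{G1} back into the original differential inequality. Since $\tau>0$, the hypothesis can be rewritten as
$$\xi'(\tau)\le \frac{\xi(\tau)+C}{\tau},$$
and the right-hand side is bounded above by $(\xi(\tau_{0})+C)/\tau_{0}$ thanks to the monotonicity of $h$ already established; this is precisely \eqref{G2}.

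There is no real obstacle here: the only subtlety is avoiding the temptation to divide by $\xi(\tau)+C$ (which need not be positive) and instead working with the quotient $h(\tau)$, whose sign of derivative is controlled directly by the hypothesis. Continuous differentiability of $\xi$ ensures that $h$ is differentiable on $[\tau_{0},+\infty)$ and that the mean value theorem applies, so the monotonicity conclusion is rigorous.
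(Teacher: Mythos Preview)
Your proof is correct. The paper itself does not supply a proof of this lemma; it is merely stated as a recalled version of Gronwall's Lemma, so there is nothing to compare against, and your argument via the auxiliary quotient $h(\tau)=(\xi(\tau)+C)/\tau$ is the standard and clean way to establish both conclusions.
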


\vspace{0.5cm}

\section{The case $\ f(x)\in L^{2}(\Omega ,\left\vert x\right\vert ^{l}dx)$}

Let $u$ and $v$ the solutions to problems (\ref{P}) and (\ref{P_sharp}),
respectively. In the sequel the following notation will be in force.

For $t\geq 0$ we denote by 
\begin{equation}
U_{t}=\{x\in \Omega \colon u(x)>t\},\;\partial U_{t}^{\text{int}}=\partial
U_{t}\cap \Omega ,\;\partial U_{t}^{\text{ext}}=\partial U_{t}\cap \partial
\Omega ,  \label{Not_U}
\end{equation}
and by 
\begin{equation}
\mu (t)=|U_{t}|_{l}\,\,\text{and }\,P_{u}(t)=P_{\frac{l}{2}}(U_{t}).
\label{mu}
\end{equation}
Analogously if $t\geq 0$ we denote by 
\begin{equation}
V_{t}=\{x\in \Omega ^{\sharp }\colon v(x)>t\},\,\,\phi (t)=|V_{t}|_{l}\,\,
\text{ and }\,\,P_{v}(t)=P_{\frac{l}{2}}(V_{t}).
\end{equation}
The proof of our main theorems requires several auxiliary results, that may
have an interest of their own.

%\begin{lemma}
%\label{u_m<v_m} The following inequalities hold true 
%\begin{equation}
%0\leq u_{m}\leq v_{m},  \label{min}
%\end{equation}
%where 
%\begin{equation}
%u_{m}:=\inf_{\Omega }\,\,u,\text{ \ }v_{m}:=\min_{\overline{\Omega ^{^{\#}}}
%}\,\,v.  \label{um_vm}
%\end{equation}
%\end{lemma}

\vspace{0.5cm}

\begin{lemma}
\label{u_m<v_m} 
The following inequalities hold true 
\begin{equation}
0\leq u_{m}\leq v_{m},  \label{min}
\end{equation}
where 
\begin{equation*}
u_{m}:=\inf_{\overline{\Omega }}\,\,u,\text{ \ }v_{m}:=\min_{\overline{
\Omega ^{^{\#}}}}\,\,v.
\end{equation*}
\begin{equation}
u_{m}:=\inf_{\Omega }\,\,u\geq 0.  \label{min}
\end{equation}
\end{lemma}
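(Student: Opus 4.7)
The strategy is to prove the two inequalities separately: the positivity $u_{m}\geq 0$ by an energy argument applied to the weak formulation (\ref{Weak_Form}), and the comparison $u_{m}\leq v_{m}$ by combining the global divergence identity obtained by testing (\ref{Weak_Form}) with $\psi \equiv 1$ with the weighted isoperimetric inequality (\ref{Is_Ineq}).

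For the positivity I would test (\ref{Weak_Form}) with $\psi =u^{-}:=\max(-u,0)\in H^{1}(\Omega )$. Using $\nabla u\cdot \nabla u^{-}=-|\nabla u^{-}|^{2}$ and $u\,u^{-}=-(u^{-})^{2}$ one gets
\begin{equation*}
-\int_{\Omega }|\nabla u^{-}|^{2}\,dx-\beta \int_{\partial \Omega }(u^{-})^{2}|x|^{l/2}\,d\mathcal{H}^{1}=\int_{\Omega }fu^{-}|x|^{l}\,dx\geq 0.
\end{equation*}
Both terms on the left are nonpositive while the right-hand side is nonnegative, so each side must vanish. Hence $u^{-}\equiv 0$, which gives $u\geq 0$ a.e.\ in $\Omega $ and therefore $u_{m}\geq 0$.

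For the comparison, recall that $f^{\sharp }$ is radial, $\Omega ^{\sharp }$ is a disk, and the Robin coefficient in (\ref{P_sharp}) is the constant $\beta (r^{\sharp })^{l/2}$; by uniqueness and rotational invariance $v$ is radial. Writing the radial equation $-(rv^{\prime })^{\prime }=r^{l+1}f^{\sharp }(r)$ and integrating from $0$ to $r$ yields $v^{\prime }(r)=-\frac{1}{r}\int_{0}^{r}s^{l+1}f^{\sharp }(s)\,ds\leq 0$, so $v$ is radially non-increasing and $v_{m}=v(r^{\sharp })$ is the constant value of $v$ on $\partial \Omega ^{\sharp }$. Choosing $\psi \equiv 1$ in the weak form of (\ref{P}) and (\ref{P_sharp}) gives
\begin{equation*}
\beta \int_{\partial \Omega }u|x|^{l/2}\,d\mathcal{H}^{1}=\int_{\Omega }f|x|^{l}\,dx=\int_{\Omega ^{\sharp }}f^{\sharp }|x|^{l}\,dx=\beta (r^{\sharp })^{l/2}\int_{\partial \Omega ^{\sharp }}v\,d\mathcal{H}^{1}=\beta v_{m}P_{\frac{l}{2}}(\Omega ^{\sharp }),
\end{equation*}
the second equality being the equimeasurability of $f$ and $f^{\sharp }$. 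Since $u\geq u_{m}\geq 0$ a.e.\ in $\Omega $, the monotonicity of the trace map in $H^{1}(\Omega )$ forces $u\geq u_{m}$ a.e.\ on $\partial \Omega $, whence
\begin{equation*}
\beta u_{m}\,P_{\frac{l}{2}}(\Omega )\leq \beta \int_{\partial \Omega }u|x|^{l/2}\,d\mathcal{H}^{1}=\beta v_{m}\,P_{\frac{l}{2}}(\Omega ^{\sharp }).
\end{equation*}
The weighted isoperimetric inequality $P_{\frac{l}{2}}(\Omega )\geq P_{\frac{l}{2}}(\Omega ^{\sharp })$, together with $u_{m}\geq 0$, then produces $u_{m}P_{\frac{l}{2}}(\Omega ^{\sharp })\leq u_{m}P_{\frac{l}{2}}(\Omega )\leq v_{m}P_{\frac{l}{2}}(\Omega ^{\sharp })$, i.e.\ $u_{m}\leq v_{m}$.

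The most delicate point is justifying the radial monotonicity of $v$ so that $v\equiv v_{m}$ on $\partial \Omega ^{\sharp }$ and the integral identity above can be read as an explicit formula for $v_{m}$; this is handled by the ODE integration above once $f^{\sharp }\geq 0$ is known. It is also worth emphasizing that the direction of the isoperimetric inequality is used in the right way only because $u_{m}\geq 0$: that is why positivity has to be proved first.
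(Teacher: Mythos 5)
Your proposal is correct and follows essentially the same route as the paper: positivity via testing with $u^{-}$, then the identity $\beta\int_{\partial\Omega}u\,|x|^{l/2}\,d\mathcal{H}^{1}=\int_{\Omega}f\,|x|^{l}\,dx=\beta v_{m}P_{\frac{l}{2}}(\Omega^{\sharp})$ combined with $u\geq u_{m}$ on $\partial\Omega$ and the weighted isoperimetric inequality. The only (harmless) differences are that you derive the divergence identity by testing the weak form with $\psi\equiv 1$ instead of integrating the strong equations, and you make explicit the radial monotonicity of $v$ and the role of $u_{m}\geq 0$ in orienting the isoperimetric inequality.
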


\begin{proof}
Using $u^{-}=\max \left\{ 0,-u\right\} $ as test function in (\ref{Weak_Form}), we
obtain 
\begin{equation*}
0\geq -\int_{\Omega }\left\vert \nabla u^{-}\right\vert
^{2}dx-\int_{\partial \Omega }\left( u^{-}\right) ^{2}\left\vert
x\right\vert ^{l/2}d\mathcal{H}^{1}=\int_{\Omega }\left( u^{-}\right)
\left\vert x\right\vert ^{l}f(x)dx.
\end{equation*}
Hence $u^{-}=0$ a.e. in $\Omega ,$ and the first  inequality in (\ref{min}) is
verified. 
 We observe that the function $v(x)$  is radial and, therefore,  $v \equiv v_{m}$ on $\partial \Omega ^{\#}$. 
From equations \eqref{P} and \eqref{P_sharp} we easily deduce that
\begin{equation*}
\begin{split}
v_{m}\,P_{\frac{l}{2}}(\Omega ^{\sharp })& 
=
\int_{\partial \Omega ^{\sharp }}v(x)\left\vert x\right\vert ^{l/2}\,
d\mathcal{H}^{1} 
=-\frac{1}{\beta }
\int_{\partial \Omega ^{\sharp }}\frac{\partial v}{\partial \nu }\,
d\mathcal{H}^{1} 
\\
&
 =-\frac{1}{\beta }\int_{\Omega ^{\sharp }}\Delta v\,dx
=
\frac{1}{\beta }
\int_{\Omega ^{\sharp }} f^{\sharp }
\left\vert
x\right\vert ^{l}dx 
=
\frac{1}{\beta }
\int_{\Omega } f
\left\vert
x\right\vert ^{l}dx  \\
& 
=-\frac{1}{\beta }\int_{\Omega }\Delta u\,dx
=-\frac{1}{\beta }
\int_{\partial \Omega }\frac{\partial u}{\partial \nu }\,d\mathcal{H}^{1}
=
\int_{\partial \Omega }u\,\left\vert x\right\vert ^{l/2}
d\mathcal{H}^{1}
\geq
u_{m}P_{\frac{l}{2}}(\Omega )\geq u_{m}P_{\frac{l}{2} }(\Omega ^{\#}),
\end{split}
\end{equation*}
where in last inequality we have used the weighted isoperimetric inequality.
The claim is hence proven.

\end{proof}

\vspace{0.5cm}

\begin{lemma}
\label{Lemma_Dopo_Fubini}It holds 
\begin{equation*}
\int_{0}^{t}\tau \left( \int_{\partial U_{t}^{\text{ext}}}\dfrac{\left\vert
x\right\vert ^{l/2}}{u(x)}d\mathcal{H}^{1}\right) d\tau \leq \frac{
\displaystyle\int_{0}^{|\Omega |_{l}}f^{\ast }(s)ds}{2\beta }.
\end{equation*}
\end{lemma}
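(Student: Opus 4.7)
The plan is to rewrite the left-hand side via Fubini, recognize it as a boundary integral of a controllable expression, and then use the simplest possible test function (the constant $1$) in the weak formulation to relate that boundary integral to $\int_{\Omega} f|x|^{l}\,dx$.

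First I would swap the order of integration. Since $x \in \partial U_{\tau}^{\text{ext}}$ iff $x \in \partial\Omega$ and $u(x)>\tau$, Fubini gives
\begin{equation*}
\int_{0}^{t}\tau \int_{\partial U_{\tau}^{\text{ext}}}\frac{|x|^{l/2}}{u(x)}\,d\mathcal{H}^{1}\,d\tau
= \int_{\partial\Omega}\frac{|x|^{l/2}}{u(x)}\int_{0}^{\min(u(x),t)}\tau\,d\tau\,d\mathcal{H}^{1}
= \frac{1}{2}\int_{\partial\Omega}\frac{|x|^{l/2}}{u(x)}\bigl(\min(u(x),t)\bigr)^{2}d\mathcal{H}^{1},
\end{equation*}
with the convention that the integrand vanishes where $u=0$ (which is legitimate since the original integral is supported on $\{u>\tau>0\}$).

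Next, I would use the elementary pointwise bound $\min(u,t)^{2}\le u\cdot u$ on $\{u\ge 0\}$, which by Lemma \ref{u_m<v_m} is satisfied on all of $\overline{\Omega}$. This gives
\begin{equation*}
\frac{1}{2}\int_{\partial\Omega}\frac{|x|^{l/2}}{u}\bigl(\min(u,t)\bigr)^{2}d\mathcal{H}^{1}
\le \frac{1}{2}\int_{\partial\Omega} u\,|x|^{l/2}\,d\mathcal{H}^{1}.
\end{equation*}
The boundary integral on the right is then evaluated by choosing $\psi \equiv 1$ (an admissible test function since $1\in H^{1}(\Omega)$) in the weak formulation \eqref{Weak_Form}, which yields
\begin{equation*}
\beta\int_{\partial\Omega} u\,|x|^{l/2}\,d\mathcal{H}^{1} = \int_{\Omega} f\,|x|^{l}\,dx.
\end{equation*}

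Finally, since $f\ge 0$, equimeasurability of $f$ and $f^{\ast}$ under the weight $|x|^{l}dx$ (proved right after the definition of the rearrangement) gives $\int_{\Omega} f\,|x|^{l}dx = \int_{0}^{|\Omega|_{l}} f^{\ast}(s)\,ds$, and chaining the three estimates produces exactly the stated bound, with the factor $1/(2\beta)$. There is no real obstacle in this argument; the only subtle point is the Fubini rewriting and the correct treatment of the integrand on the (negligible) set where $u$ could vanish on $\partial\Omega$, but since the original integrand was a priori well-defined on $\{u>\tau\}$ this poses no real difficulty.
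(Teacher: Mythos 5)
Your proof is correct and follows essentially the same route as the paper: Fubini on the double integral, the pointwise bound reducing the boundary integrand to $u\,|x|^{l/2}$ (the paper achieves this by first enlarging the $\tau$-range to $(0,\infty)$, which is the same estimate in disguise), the test function $\psi\equiv 1$ in \eqref{Weak_Form}, and equimeasurability of $f$ and $f^{\ast}$. No substantive difference.
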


\begin{proof}
Fubini's Theorem gives 
\begin{gather*}
\int_{0}^{t}\tau \left( \int_{\partial U_{t}^{\text{ext}}}
\dfrac{\left\vert
x\right\vert ^{l/2}}{u(x)}d\mathcal{H}^{1}\right) d\tau 
\leq 
\int_{0}^{\infty
}\tau \left( \int_{\partial U_{t}^{\text{ext}}}
\dfrac{\left\vert
x\right\vert ^{l/2}}{u(x)}d\mathcal{H}^{1}\right) d\tau 
=
\int_{\partial \Omega
}\left( \int_{0}^{u(x)}\tau d\tau \right) \dfrac{\left\vert x\right\vert ^{l/2}
}{u(x)}d\mathcal{H}^{1} \\
=
\frac{1}{2}\int_{\partial \Omega }u(x)\left\vert x\right\vert ^{l/2}d\mathcal{
H}^{1}
=
\frac{1}{2\beta }\int_{\Omega }f(x)\left\vert x\right\vert ^{l}dx
=
\frac{\displaystyle\int_{0}^{|\Omega |_{l}}f^{\ast }(s)ds}{2\beta }.
\end{gather*}
\end{proof}

\vspace{0.5cm}

Now in order to render the notations less heavy we introduce two constants
 that will appear often in the following
\begin{equation}
\label{C(l)}
C(l)=2\pi \left( l+2\right) \text{ \ and \ }C(\Omega )=\frac{1}{2\beta ^{2}}
\left( \int_{0}^{\left\vert \Omega \right\vert _{l}}f^{\ast }(s)ds\right)
^{2}.
\end{equation}

\vspace{0.5cm}

\begin{lemma}
For almost every $t>0$ it holds that 
\begin{equation}
C(l)\mu (t)
\leq 
\left( -\mu ^{\prime }(t)+\frac{1}{\beta }\int_{\partial
U_{t}^{\text{ext}}}\dfrac{\left\vert x\right\vert ^{l/2}}{u(x)}d\mathcal{H}
^{1}\right) \left( \int_{0}^{\mu (t)}f^{\ast }(s)ds\right) 
\label{Prima_Fubini}
\end{equation}
and 
\begin{equation}
C(l)\phi (t)
=
\left( -\phi ^{\prime }(t)+\frac{1}{\beta }\int_{\partial
V_{t}^{\text{ext}}}\dfrac{\left\vert x\right\vert ^{l/2}}{v(x)}d\mathcal{H}
^{1}\right) \left( \int_{0}^{\phi (t)}f^{\ast }(s)ds\right) .
\label{equality}
\end{equation}
%where 
%\begin{equation}
%C(l)=2\pi \left( l+2\right) \text{ \ and \ }C(\Omega )=\frac{1}{2\beta ^{2}}
%\left( \int_{0}^{\left\vert \Omega \right\vert _{l}}f^{\ast }(s)ds\right)
%^{2}.  \label{C(l)}
%\end{equation}
\end{lemma}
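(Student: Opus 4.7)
The plan is to derive the standard Talenti-style identity from the weak formulation, apply the weighted isoperimetric inequality (\ref{Is_Ineq}) after splitting the perimeter into its interior and boundary pieces, and close the estimate by two Cauchy--Schwarz inequalities together with the Hardy--Littlewood rearrangement bound.

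First, for $t\geq 0$ and $h>0$ I plug the Lipschitz test function
\[
\psi_{t,h}(x)=\min\!\left(\frac{(u(x)-t)^+}{h},\,1\right)\in H^1(\Omega)
\]
into (\ref{Weak_Form}). This gives
\[
\frac{1}{h}\int_{\{t<u<t+h\}}|\nabla u|^2\,dx
+\beta\int_{\partial\Omega}\psi_{t,h}\,u\,|x|^{l/2}\,d\mathcal H^1
=\int_\Omega f\,\psi_{t,h}\,|x|^l\,dx.
\]
Letting $h\to 0^+$, using coarea on the left-hand bulk term and dominated convergence on the remaining pieces, I obtain for a.e.\ $t>0$
\begin{equation}\label{eq:bulkbdry}
\int_{\partial U_t^{\text{int}}}|\nabla u|\,d\mathcal H^1
+\beta\int_{\partial U_t^{\text{ext}}}u\,|x|^{l/2}\,d\mathcal H^1
=\int_{U_t}f\,|x|^l\,dx.
\end{equation}
The coarea formula with the weight $|x|^l$ also yields
$-\mu'(t)=\displaystyle\int_{\partial U_t^{\text{int}}}\frac{|x|^l}{|\nabla u|}\,d\mathcal H^1$.

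Next, I split the weighted perimeter as
\[
P_{l/2}(U_t)=A(t)+B(t),\quad
A(t):=\int_{\partial U_t^{\text{int}}}|x|^{l/2}\,d\mathcal H^1,\quad
B(t):=\int_{\partial U_t^{\text{ext}}}|x|^{l/2}\,d\mathcal H^1,
\]
and apply Cauchy--Schwarz on each piece by writing $|x|^{l/2}=(|x|^l/|\nabla u|)^{1/2}\,|\nabla u|^{1/2}$ on $\partial U_t^{\text{int}}$ and $|x|^{l/2}=(|x|^{l/2}/u)^{1/2}\,(u\,|x|^{l/2})^{1/2}$ on $\partial U_t^{\text{ext}}$. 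This gives
\[
A(t)^2\le(-\mu'(t))\int_{\partial U_t^{\text{int}}}|\nabla u|\,d\mathcal H^1,\qquad
B(t)^2\le\Big(\int_{\partial U_t^{\text{ext}}}\tfrac{|x|^{l/2}}{u}\,d\mathcal H^1\Big)\Big(\int_{\partial U_t^{\text{ext}}}u\,|x|^{l/2}\,d\mathcal H^1\Big).
\]
The elementary inequality $\sqrt{ab}+\sqrt{cd}\le\sqrt{(a+c)(b+d)}$ (another Cauchy--Schwarz in $\mathbb R^2$) then yields
\[
P_{l/2}(U_t)^2=(A(t)+B(t))^2
\le\left(-\mu'(t)+\tfrac{1}{\beta}\int_{\partial U_t^{\text{ext}}}\tfrac{|x|^{l/2}}{u}\,d\mathcal H^1\right)\left(\int_{\partial U_t^{\text{int}}}|\nabla u|\,d\mathcal H^1+\beta\int_{\partial U_t^{\text{ext}}}u\,|x|^{l/2}\,d\mathcal H^1\right).
\]
Combining with (\ref{eq:bulkbdry}), the Hardy--Littlewood bound $\int_{U_t}f\,|x|^l\,dx\le\int_0^{\mu(t)}f^*(s)\,ds$, and the weighted isoperimetric inequality (\ref{Is_Ineq}) in the form $P_{l/2}(U_t)^2\ge C(l)\mu(t)$, produces exactly (\ref{Prima_Fubini}).

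Finally, (\ref{equality}) follows by running the same argument for $v$. Since $v=v^\sharp$ is radial and radially non-increasing, for $t\ge v_m$ the set $V_t$ is a disk centered at the origin with $\partial V_t^{\text{ext}}=\emptyset$, and for $t<v_m$ we have $V_t=\Omega^\sharp$ with $\partial V_t^{\text{int}}=\emptyset$. In either case $|\nabla v|$ and $|x|$ are constant on each piece of $\partial V_t$, so both Cauchy--Schwarz steps hold with equality; likewise $P_{l/2}(V_t)^2= C(l)\phi(t)$ by direct computation of the weighted perimeter of a centered disk, and $\int_{V_t}f^\sharp|x|^l\,dx=\int_0^{\phi(t)}f^*(s)\,ds$ because $f^\sharp$ is the weighted radial decreasing rearrangement.

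The main technical point is the passage to the limit $h\to 0^+$ in the weak formulation, which produces (\ref{eq:bulkbdry}); everything else is a bookkeeping exercise involving two applications of Cauchy--Schwarz and (\ref{Is_Ineq}). The subtlety in the equality case for $v$ is merely to track the transition at $t=v_m$, which is automatic because the symmetrized problem has a radial datum and a constant Robin coefficient on $\partial\Omega^\sharp$.
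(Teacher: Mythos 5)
Your proof is correct and follows essentially the same route as the paper: the same truncated test function (up to normalization by $h$), the resulting level-set identity, Cauchy--Schwarz against the weighted perimeter, the Hardy--Littlewood bound, and the weighted isoperimetric inequality, with your two separate Cauchy--Schwarz estimates recombined via $\sqrt{ab}+\sqrt{cd}\le\sqrt{(a+c)(b+d)}$ being equivalent to the paper's single application with the piecewise-defined function $g$. The only nitpick is that the coarea formula in general gives $-\mu'(t)\ge\int_{\partial U_t^{\mathrm{int}}}|x|^l|\nabla u|^{-1}\,d\mathcal H^1$ rather than equality (because of possible critical points of $u$), but the inequality points in the direction you need, so nothing breaks.
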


\begin{proof}
Let $t,h>0.$ Using the following test functions in (\ref{P}) 
\begin{equation*}
\varphi _{h}(x)=\left\{ 
\begin{array}{ccc}
0 & \text{if} & 0<u \leq t \\ 
h & \text{if} & u>t+h \\ 
u-t & \text{if} & t<u \leq t+h
\end{array}
\right. 
\end{equation*}
we obtain 
\begin{eqnarray*}
&&\int_{U_{t}\backslash U_{t+h}}\left\vert \nabla u\right\vert ^{2}dx
+
\beta
h\int_{\partial U_{t+h}^{\text{ext}}}u\left\vert x\right\vert ^{l/2}d\mathcal{H
}^{1}
+\beta  \int_{\partial U_{t}^{\text{ext}}\backslash \partial  U_{t+h}^{\text{ext }
}}u(u-t)\left\vert x\right\vert ^{l/2}d\mathcal{H}^{1} \\
&=&
\int_{U_{t}\backslash U_{t+h}}f(u-t)\left\vert x\right\vert
^{l}dx+h\int_{U_{t+h}}f\left\vert x\right\vert ^{l}dx.
\end{eqnarray*}
Dividing by $h$ and then letting $h$ go to $0$ in the previous equality and,
finally, using coarea formula, we obtain 
\begin{equation*}
\int_{\partial U_{t}}g(x) d \mathcal{H}^{1}
=
\int_{\partial U_{t}^{\text{int}}}\left\vert
\nabla u\right\vert d\mathcal{H}^{1}
+
\beta \int_{\partial U_{t}^{\text{ext}
}}u\left\vert x\right\vert ^{l/2}d\mathcal{H}^{1}=\int_{U_{t}}f\left\vert
x\right\vert ^{l}dx
\end{equation*}
where 
\begin{equation*}
g(x)=\left\{ 
\begin{array}{ccc}
\left\vert \nabla u\right\vert  & \text{if} & x\in \partial U_{t}^{\text{int}
} \\ 
&  &  \\ 
\beta u\left\vert x\right\vert ^{l/2} & \text{if} & x\in \partial U_{t}^{\text{
ext}}.
\end{array}
\right. 
\end{equation*}
On the other hand we have 
\begin{equation*}
P_{u}^{2}(t)\leq \left( \int_{\partial U_{t}}g(x)
d\mathcal{H}^{1}\right) \left( \int_{\partial U_{t}}\frac{\left\vert x\right\vert^{l}}{g(x)}
d \mathcal{H}^{1}\right) 
\end{equation*}
\begin{equation*}
=\left( \int_{\partial U_{t}}g(x)
d\mathcal{H}^{1}\right) 
\left( \int_{\partial U_{t}^{\text{int}}}\dfrac{\left\vert
x\right\vert ^{l}}{\left\vert \nabla u\right\vert }d\mathcal{H}^{1}
+
\frac{1}{\beta }\int_{\partial U_{t}^{\text{ext}}}\dfrac{\left\vert x\right\vert ^{l/2}
}{u(x)}d\mathcal{H}^{1}\right) 
\end{equation*}
\begin{equation*}
\leq \left( -\mu ^{\prime }(t)+\frac{1}{\beta }
\int_{\partial U_{t}^{\text{ext}}}\dfrac{\left\vert x\right\vert ^{l/2}}
{u(x)}d\mathcal{H}^{1}\right)
\left( \int_{0}^{\mu (t)}f^{\ast }(s)ds\right). 
\end{equation*}
Using the isoperimetric inequality (\ref{Is_Ineq}) we get the claim (\ref
{Prima_Fubini}).

Note that the distribution function $\phi $ of $v$ fulfills equality (\ref
{equality}), in place of inequality, since, as it is straightforward to
check, $v$ is a radial and radially decreasing function.
\end{proof}

Now we are in position to prove our first main result.

\vspace{0.5cm}

\begin{proof}[Proof of Theorem \protect\ref{L^1_L^2}.]
Multiplying both sides of (\ref{Prima_Fubini}) by $t$ and then integrating
over $(0,\tau )$, with $\tau \geq v_{m}$, we get 
\begin{eqnarray*}
&&C(l)\int_{0}^{\tau }t\mu (t)dt \\
&\leq &
\int_{0}^{\tau }\left( -t\mu ^{\prime }(t)\int_{0}^{\mu (t)}f^{\ast
}(s)ds\right) dt
+
\int_{0}^{\tau }\left( \frac{t}{\beta }\int_{\partial
U_{t}^{\text{ext}}}\dfrac{\left\vert x\right\vert ^{l/2}}{u(x)}d\mathcal{H}
^{1}\left( \int_{0}^{\mu (t)}f^{\ast }(s)ds\right) \right) dt \\
&\leq &
\int_{0}^{\tau }\left( -t\mu ^{\prime }(t)\int_{0}^{\mu (t)}f^{\ast
}(s)ds\right) dt
+
\frac{1}{\beta }\left( \int_{0}^{\left\vert \Omega
\right\vert _{l}}f^{\ast }(s)ds\right) \int_{0}^{\tau }\left(
t\int_{\partial U_{t}^{\text{ext}}}\dfrac{\left\vert x\right\vert ^{l/2}}{u(x)}
d\mathcal{H}^{1}\right) dt.
\end{eqnarray*}
Lemma \ref{Lemma_Dopo_Fubini} yields 
\begin{equation*}
C(l)\int_{0}^{\tau }t\mu (t)dt\leq \int_{0}^{\tau }-t\left( \int_{0}^{\mu
(t)}f^{\ast }(s)ds\right) \mu ^{\prime }(t)dt+\frac{1}{2\beta ^{2}}\left(
\int_{0}^{\left\vert \Omega \right\vert _{l}}f^{\ast }(s)ds\right) ^{2},
\end{equation*}
or equivalently 
\begin{equation}
C(l)\int_{0}^{\tau }t\mu (t)dt\leq \int_{0}^{\tau }-t\left[ \left(
\int_{0}^{\mu (t)}f^{\ast }(\sigma )d\sigma \right) \mu ^{\prime }(t)\right]
dt+C(\Omega ),  \label{Bef_parts}
\end{equation}
where $C(l)$ and $C(\Omega )$ are the constants defined in \eqref{C(l)}. An
integration by parts of the left hand side of (\ref{Bef_parts}) gives 
\begin{equation}
C(l)\int_{0}^{\tau }t\mu (t)dt=C(l)\left[ \tau \int_{0}^{\tau }\mu
(t)dt-\int_{0}^{\tau }\left( \int_{0}^{t}\mu (\sigma )d\sigma \right) dt
\right] .  \label{left}
\end{equation}
Setting 
\begin{equation*}
F(\omega ):=\int_{0}^{\omega }\left( \int_{0}^{\eta }f^{\ast }(s)ds\right)
d\eta ,
\end{equation*}
an integration by parts of the right hand side of (\ref{Bef_parts}) gives 
\begin{equation}
\int_{0}^{\tau }-t\left( \int_{0}^{\mu (t)}f^{\ast }(s)ds\right) d\mu
(t)=-\tau F(\mu (\tau ))+\int_{0}^{\tau }F(\mu (s))ds.  \label{right}
\end{equation}
From \eqref{left} and \eqref{right} we deduce that 
\begin{equation}
\tau F(\mu (\tau ))+C(l)\tau \int_{0}^{\tau }\mu (t)dt\leq \int_{0}^{\tau
}F(\mu (s))ds+C(l)\int_{0}^{\tau }\left( \int_{0}^{t}\mu (\sigma )d\sigma
\right) dt+C(\Omega ).  \label{beforeGronwall}
\end{equation}
Defining 
\begin{equation*}
\xi _{1}(\tau ):=\int_{0}^{\tau }F(\mu (s))ds+C(l)\int_{0}^{\tau }\left(
\int_{0}^{t}\mu (\sigma )d\sigma \right) dt
\end{equation*}
we can rewrite inequality \eqref{beforeGronwall} as follows 
\begin{equation*}
\tau \xi _{1}^{\prime }(\tau )\leq \xi _{1}(\tau )+C(\Omega ).
\end{equation*}
Gronwall Lemma (\ref{G2}), with $\tau _{0}=v_{m},$ gives 
\begin{equation}
F(\mu (\tau ))+C(l)\int_{0}^{\tau }\mu (t)dt\leq \widetilde{C},\text{ \ }
\tau \geq v_{m}.  \label{Ineq_mu}
\end{equation}
where 
\begin{equation*}
\widetilde{C}=\frac{\xi _{1}(v_{m})+C(\Omega )}{v_{m}}=\frac{\displaystyle
\int_{0}^{v_{m}}F(\mu (s))ds+C(l)\displaystyle\int_{0}^{v_{m}}\left( 
\displaystyle\int_{0}^{t}\mu (\sigma )d\sigma \right) dt+C(\Omega )}{v_{m}}.
\end{equation*}
While for $\phi (t),$ the distribution function of $v,$ we have the equality
sign 
\begin{equation}
F(\phi (\tau ))+C(l)\int_{0}^{\tau }\phi (t)dt=\widetilde{C}.  \label{Eq_phi}
\end{equation}
Inequalities (\ref{Ineq_mu}) and (\ref{Eq_phi}) clearly imply that 
\begin{equation*}
F(\phi (\tau ))+C(l)\int_{0}^{\tau }\phi (t)dt\geq F(\mu (\tau
))+C(l)\int_{0}^{\tau }\mu (t)dt,\text{ \ }\tau \geq v_{m}.
\end{equation*}
Since 
\begin{equation*}
\lim_{\tau \rightarrow +\infty }F(\phi (\tau ))=\lim_{\tau \rightarrow
+\infty }F(\mu (\tau ))=F(0)=0
\end{equation*}
we get 
\begin{equation*}
\left\Vert v\right\Vert _{L^{1}(\Omega ^{\sharp };\left\vert x\right\vert
^{l})}=\int_{0}^{\infty }\phi (t)dt\geq \int_{0}^{\infty }\mu
(t)dt=\left\Vert u\right\Vert _{L^{1}(\Omega ;\left\vert x\right\vert ^{l})},
\end{equation*}
i.e. our first claim, inequality (\ref{L^1}).

Now we want to establish the same comparison between the weighted $L^{2}$
-norms of $u$ and $v$, i.e. (\ref{L^2}). If $\tau $ goes to $+\infty $ in (
\ref{Bef_parts}) we obtain 
\begin{equation*}
C(l)\int_{0}^{\infty }t\mu (t)dt\leq \int_{0}^{\infty }F(\mu (\sigma
))d\sigma +C(\Omega ),
\end{equation*}
while for $\phi $ it holds 
\begin{equation*}
C(l)\int_{0}^{\infty }t\phi (t)dt=\int_{0}^{\infty }F(\phi (\sigma ))d\sigma
+C(\Omega ).
\end{equation*}
Therefore we get the claim once we show that 
\begin{equation}
\int_{0}^{\infty }F(\mu (\sigma ))d\sigma \leq \int_{0}^{\infty }F(\phi
(\sigma ))d\sigma .  \label{claim}
\end{equation}
Multiplying both sides of (\ref{Prima_Fubini}) by $\dfrac{tF(\mu (t))}{\mu
(t)}$ and then integrating over $(0,\tau )$ we get 
\begin{gather}
\label{AB}
A:=C(l)\int_{0}^{\tau }tF(\mu (t))d\tau \leq    \\
\int_{0}^{\tau }\left( -\mu ^{\prime }(t)
+
\frac{1}{\beta }\int_{\partial
U_{t}^{\text{ext}}}\dfrac{\left\vert x\right\vert ^{l/2}}{u(x)}d\mathcal{H}
^{1}\right) \dfrac{tF(\mu (t))}{\mu (t)}\left( \int_{0}^{\mu (t)}f^{\ast
}(s)ds\right) dt=:B  \notag
\end{gather}
Again an integration by parts gives 
\begin{eqnarray}
A &=&C(l)\left[ t\int_{0}^{s}F(\mu (t))d\tau \right] _{0}^{\tau
}-C(l)\int_{0}^{\tau }\left( \int_{0}^{s}F(\mu (\sigma ))d\sigma \right) 
\label{A} \\
&=&C(l)\tau \int_{0}^{\tau }F(\mu (\sigma ))d\sigma -C(l)\int_{0}^{\tau
}\left( \int_{0}^{s}F(\mu (\sigma ))d\sigma \right) .  \notag
\end{eqnarray}
Now set 
\begin{equation*}
H(\varrho ):=\int_{0}^{\varrho }\frac{F(\sigma )}{\sigma }\int_{0}^{\sigma
}f^{\ast }(s)ds
\end{equation*}
and 
\begin{eqnarray*}
B &:&=\int_{0}^{\tau }\left( -\mu ^{\prime }(t)\right) \dfrac{tF(\mu (t))}{
\mu (t)}\left( \int_{0}^{\mu (t)}f^{\ast }(s)ds\right) dt \\
&&
+
\int_{0}^{\tau }\left( \frac{1}{\beta }
\int_{\partial U_{t}^{\text{ext}}}
\dfrac{\left\vert x\right\vert ^{l/2}}{u(x)}d\mathcal{H}^{1}\right) \dfrac{
tF(\mu (t))}{\mu (t)}\left( \int_{0}^{\mu (t)}f^{\ast }(s)ds\right) dt.
\end{eqnarray*}
Furthermore define 
\begin{equation*}
B_{1}:=\int_{0}^{\tau }t\left[ \dfrac{F(\mu (t))}{\mu (t)}\left(
\int_{0}^{\mu (t)}f^{\ast }(s)ds\right) \left( -\mu ^{\prime }(t)\right) 
\right] dt=-\int_{0}^{\tau }t\left[ \frac{d}{dt}H(\mu (t))\right] dt
\end{equation*}
and 
\begin{equation*}
B_{2}:=\int_{0}^{\tau }\left( \frac{1}{\beta }\int_{\partial U_{t}^{\text{
ext }}}\dfrac{\left\vert x\right\vert ^{l/2}}{u(x)}d\mathcal{H}^{1}\right) 
\dfrac{tF(\mu (t))}{\mu (t)}\left( \int_{0}^{\mu (t)}f^{\ast }(s)ds\right)
dt,
\end{equation*}
so that 
\begin{equation*}
B=B_{1}+B_{2}.
\end{equation*}
Integrating by parts in $B_{1}$ we obtain 
\begin{equation}
B_{1}=-\left[ tH(\mu (t))\right] _{0}^{\tau }+\int_{0}^{\tau }H(\mu
(t))dt=-\tau H(\mu (\tau ))+\int_{0}^{\tau }H(\mu (t))dt.  \label{B1}
\end{equation}
Since, as it is easy to verify, $\dfrac{F(\rho )}{\rho }$ is a nondecreasing
function, using Lemma \ref{Lemma_Dopo_Fubini} we derive 
\begin{eqnarray}
B_{2} &\leq &
\frac{1}{\beta }\dfrac{F(\left\vert \Omega \right\vert _{l})}{
\left\vert \Omega \right\vert _{l}}\int_{0}^{\tau }\left( t\int_{\partial
U_{t}^{\text{ext}}}\dfrac{\left\vert x\right\vert ^{l/2}}{u(x)}d\mathcal{H}
^{1}\right) \left( \int_{0}^{\mu (t)}f^{\ast }(s)ds\right) dt  \label{B2} \\
&\leq &\frac{1}{2\beta ^{2}}\dfrac{F(\left\vert \Omega \right\vert _{l})}{
\left\vert \Omega \right\vert _{l}}\left( \int_{0}^{\left\vert \Omega
\right\vert _{l}}f^{\ast }(s)ds\right) ^{2}=:\widehat{C}  \notag
\end{eqnarray}
Collecting (\ref{AB}), (\ref{A}), (\ref{B1}) and (\ref{B2}) we infer that 
\begin{eqnarray}
&&\tau \left[ C(l)\int_{0}^{\tau }F(\mu (\sigma ))d\sigma +H(\mu (\tau ))
\right]   \label{csi} \\
&\leq &C(l)\int_{0}^{\tau }\left( \int_{0}^{s}F(\mu (\sigma ))d\sigma
\right) +\int_{0}^{\tau }H(\mu (t))dt+\widehat{C}.  \notag
\end{eqnarray}
Define 
\begin{equation*}
\xi _{2}(\tau )=C(l)\int_{0}^{\tau }\left( \int_{0}^{s}F(\mu (\sigma
))d\sigma \right) ds+\int_{0}^{\tau }H(\mu (t))dt,
\end{equation*}
then (\ref{csi}) can be rewritten as follows 
\begin{equation*}
\tau \xi _{2}^{\prime }(\tau )\leq \xi _{2}(\tau )+\widehat{C},\text{ \ }
\tau \geq v_{m}.
\end{equation*}
At this point Gronwall's Lemma (\ref{G2}) ensures that 
\begin{equation*}
\xi _{2}^{\prime }(\tau )\leq \frac{\xi _{2}(v_{m})+\widehat{C}}{v_{m}},
\text{ \ }\tau \geq v_{m}.
\end{equation*}
Therefore 
\begin{gather}
\xi _{2}^{\prime }(\tau )=C(l)\int_{0}^{\tau }F(\mu (\sigma ))d\sigma +H(\mu
(\tau ))  \label{Csi_prime_2} \\
\leq \frac{1}{v_{m}}\left[ C(l)\int_{0}^{v_{m}}\left( \int_{0}^{s}F(\mu
(\sigma ))d\sigma \right) ds+\int_{0}^{v_{m}}H(\mu (t))dt+\widehat{C}\right]
,\text{ \ }\tau \geq v_{m}.  \notag
\end{gather}
Since $F,H$ are nondecreasing functions and \ $\mu (\sigma )\leq \left\vert
\Omega \right\vert _{l}$ we have 
\begin{eqnarray*}
&&C(l)\int_{0}^{\tau }F(\mu (\sigma ))d\sigma +H(\mu (\tau )) \\
&\leq &\frac{1}{v_{m}}\left[ C(l)\int_{0}^{v_{m}}\left(
\int_{0}^{s}F(\left\vert \Omega \right\vert _{l})d\sigma \right)
ds+\int_{0}^{v_{m}}H(\left\vert \Omega \right\vert _{l})dt+\widehat{C}\right]
,\text{ \ }\tau \geq v_{m}.
\end{eqnarray*}
The last inequality can be equivalently written as follows 
\begin{eqnarray}
&&C(l)\int_{0}^{\tau }F(\mu (\sigma ))d\sigma +H(\mu (\tau ))
\label{Ineq_mu_2} \\
&\leq &C(l)F(\left\vert \Omega \right\vert _{l})\frac{v_{m}}{2}+H(\left\vert
\Omega \right\vert _{l})+\frac{\widehat{C}}{v_{m}}=:G(\left\vert \Omega
\right\vert _{l}),\text{\ \ }\tau \geq v_{m},  \notag
\end{eqnarray}
where 
\begin{equation*}
G(t):=C(l)F(t)\frac{v_{m}}{2}+H(t)+\frac{\widehat{C}}{v_{m}}.
\end{equation*}
As easy to verify, it holds that 
\begin{equation}
C(l)\int_{0}^{\tau }F(\phi (\sigma ))d\sigma +H(\phi (\tau ))=G(\left\vert
\Omega \right\vert _{l}).  \label{Eq_phi_2}
\end{equation}
Therefore from (\ref{Ineq_mu_2}) and (\ref{Eq_phi_2}) we get 
\begin{equation*}
C(l)\int_{0}^{\tau }F(\mu (\sigma ))d\sigma +H(\mu (\tau ))\leq
C(l)\int_{0}^{\tau }F(\phi (\sigma ))d\sigma +H(\phi (\tau ))
\end{equation*}
which implies 
\begin{equation*}
\lim_{\tau \rightarrow \infty }\left[ C(l)\int_{0}^{\tau }F(\mu (\sigma
))d\sigma +H(\mu (\tau ))\right] \leq \lim_{\tau \rightarrow \infty }\left[
C(l)\int_{0}^{\tau }F(\phi (\sigma ))d\sigma +H(\phi (\tau ))\right] .
\end{equation*}
Since 
\begin{equation*}
\lim_{\tau \rightarrow \infty }H(\mu (\tau ))=\lim_{\tau \rightarrow \infty
}H(\phi (\tau ))=0
\end{equation*}
we conclude that 
\begin{equation*}
\int_{0}^{\infty }F(\mu (\sigma ))d\sigma \leq \int_{0}^{\infty }F(\phi
(\sigma ))d\sigma ,
\end{equation*}
i.e. our claim (\ref{claim}).
\end{proof}

\section{The case $f(x)=1$}

As already pointed out when $f$ is constant the previous result can be
considerable sharpened. In this short Section we provide the proof of
Theorem (\ref{f(x)=1}). Here we will use the same notation of section 3.

Let $u$ and $v$ the solutions of the problems (\ref{P}) and (\ref{P_sharp})
with $f(x)=1$ a.e. in $\Omega ,$ that is 
\begin{equation}
\left\{ 
\begin{array}{cc}
-\Delta u=\left\vert x\right\vert ^{l} & \text{in \ }\Omega  \\ 
&  \\ 
\dfrac{\partial u}{\partial \nu }+\beta \left\vert x\right\vert ^{l/2}u=0 & 
\text{on \ }\partial \Omega 
\end{array}
\right.   \label{Eq}
\end{equation}
and 
\begin{equation}
\left\{ 
\begin{array}{cc}
-\Delta v=\left\vert x\right\vert ^{l} & \text{in \ }\Omega ^{\sharp } \\ 
&  \\ 
\dfrac{\partial v}{\partial \nu }+\beta \left( r^{\sharp }\right) ^{l/2}v=0 & 
\text{on \ }\partial \Omega ^{\sharp }.
\end{array}
\right.   \label{Eq_S}
\end{equation}

\vspace{0.5cm}

\begin{proof}[Proof of Theorem \protect\ref{f(x)=1}.]
Note, firstly, that when $f=1$ inequality (\ref{Prima_Fubini}) 
and equality (\ref{equality}) become 
\begin{equation}
C(l)\leq -\mu ^{\prime }(t)+\frac{1}{\beta }\int_{\partial U_{t}^{\text{ext}
}}\dfrac{\left\vert x\right\vert ^{l/2}}{u(x)}d\mathcal{H}^{1}
\label{before_I}
\end{equation}
and 
\begin{equation*}
C(l)=-\phi ^{\prime }(t)+\frac{1}{\beta }\int_{\partial U_{t}^{\text{ext}}} 
\dfrac{\left\vert x\right\vert ^{l/2}}{v(x)}d\mathcal{H}^{1},
\end{equation*}
respectively.
Multiplying (\ref{before_I}) by $t$ and then integrating over $(0,\tau )$,
with $\tau \geq v_{m}$, we get 
\begin{equation*}
2\pi \left( l+2\right) \frac{\tau ^{2}}{2}
\leq
 \int_{0}^{\tau }\left( -\mu
^{\prime }(\sigma )\sigma \right) d\sigma
 +
\frac{1}{\beta }\int_{0}^{\tau
}t\left( \int_{\partial U_{t}^{\text{ext}}}\dfrac{\left\vert x\right\vert
^{l/2}}{u(x)}d\mathcal{H}^{1}\right) d t .
\end{equation*}
The last inequality together with Lemma \ref{Lemma_Dopo_Fubini} with $f=1$ yield 
%Fubini's Theorem gives 
%\begin{equation}
%\int_{0}^{\infty }\tau \left( \int_{\partial U_{t}^{\text{ext}}}\dfrac{
%\left\vert x\right\vert ^{l}}{u(x)}d\mathcal{H}^{1}\right) d\tau
%=\int_{\partial \Omega }\left( \int_{0}^{u(x)}\tau d\tau \right) \dfrac{
%\left\vert x\right\vert ^{l}}{u(x)}d\mathcal{H}^{1}=\frac{1}{2}
%\int_{\partial \Omega }u(x)\left\vert x\right\vert ^{l}d\mathcal{H}^{1}
%\label{a}
%\end{equation}
%and hence 
%\begin{equation}
%2\pi \left( l+2\right) \frac{\tau ^{2}}{2}\leq \int_{0}^{\tau }\left( -\mu
%^{\prime }(\sigma )\sigma \right) d\sigma +\frac{1}{2}\int_{\partial \Omega
%}u(x)\left\vert x\right\vert ^{l}d\mathcal{H}^{1},\text{ \ }\tau \geq v_{m}.
%\label{b}
%\end{equation}
%On the other hand, $\ $using $\psi =1$ as test function in (\ref{Eq}), we
%get 
%\begin{equation}
%\beta \int_{\partial \Omega }u(x)\left\vert x\right\vert ^{l}d\mathcal{H}
%^{1}=\left\vert \Omega \right\vert _{l}.  \label{c}
%\end{equation}
%Collecting (\ref{a}), (\ref{b}) and (\ref{c}) we get 
%\begin{eqnarray*}
%\int_{0}^{\tau }t\left( \int_{\partial U_{t}^{\text{ext}}}\dfrac{\left\vert
%x\right\vert ^{l}}{u(x)}d\mathcal{H}^{1}\right) d\tau  &\leq
%&\int_{0}^{\infty }t\left( \int_{\partial U_{t}^{\text{ext}}}\dfrac{
%\left\vert x\right\vert ^{l}}{u(x)}d\mathcal{H}^{1}\right) d\tau  \\
%&=&\frac{1}{2}\int_{\partial \Omega }u(x)\left\vert x\right\vert ^{l}d
%\mathcal{H}^{1}=\frac{1}{2\beta }\left\vert \Omega \right\vert _{l},\text{ \ 
%}\tau \geq v_{m}
%\end{eqnarray*}
%and finally 
\begin{equation}
2\pi \left( l+2\right) \tau \leq \int_{0}^{\tau }\left( -\mu ^{\prime
}(\sigma )\sigma \right) d\sigma +\frac{1}{2\beta }\left\vert \Omega
\right\vert _{l},\text{ \ }\tau \geq v_{m}.  \label{d}
\end{equation}
Again for the solution $v$ of the symmetrized problem we have the equality
sign 
\begin{equation}
2\pi \left( l+2\right) \tau =\int_{0}^{\tau }\left( -\phi ^{\prime }(\sigma
)\sigma \right) d\sigma +\frac{1}{2\beta }\left\vert \Omega \right\vert _{l},
\text{ \ }\tau \geq v_{m}.  \label{e}
\end{equation}
From (\ref{d}) and (\ref{e}) we immediately deduce that 
\begin{equation*}
\int_{0}^{\tau }\left( -\phi ^{\prime }(\sigma )\sigma \right) d\sigma \leq
\int_{0}^{\tau }\left( -\mu ^{\prime }(\sigma )\sigma \right) d\sigma ,\text{
\ }\tau \geq v_{m}.
\end{equation*}
An integration by parts ensures that 
\begin{equation}
\mu (\tau )\leq \phi (\tau ),\text{ \ \ }\tau \geq v_{m}.  \label{mu<phi}
\end{equation}
Our claim holds true, since, clearly, $\phi (\tau )=\left\vert \Omega
\right\vert _{l}\geq \mu (\tau )$ \ for all $\tau \in \left[ 0,v_{m}\right] $
.
\end{proof}

\vspace{0.5cm}

%\begin{remark}
%We have also shown if $f(x)=1$ a.e. in $\Omega ,$ then
%\begin{equation*}
%u_{m}\leq v_{m}.
%\end{equation*}
%Indeed if, by absurd, $v_{m}>u_{m},$ then for any $r\in (v_{m},u_{m})$ we
%would have
%\begin{equation*}
%\mu (r)=\left\vert \Omega \right\vert _{l}>\phi (\tau ),
%\end{equation*}
%which violates inequality (\ref{mu<phi}).
%\end{remark}

\section{A Faber-Krahn inequality}

Consider the functional 
\begin{equation}
F:u\in H^{1}(\Omega )\rightarrow \frac{\displaystyle\int_{\Omega }\left\vert
\nabla u\right\vert ^{2}dx
+
\beta \displaystyle\int_{\partial \Omega
}u^{2}\left\vert x\right\vert ^{l/2}d\mathcal{H}^{1}}{\displaystyle 
\int_{\Omega }u^{2}\left\vert x\right\vert ^{l}dx}.  \label{F}
\end{equation}
Firstly note that $F$ is well defined on $H^{1}(\Omega ).$ 
Indeed, 
%Theorem 5.22 in 
as well-known (see e.g. \cite{A}) $L^{q}(\partial \Omega )$ is compactly
embedded in $H^{1}(\Omega )$ for any $q\geq 1.$ Therefore, since $0\in
\Omega ,$ we have that $\exists C=C(\Omega ,l):$ 
\begin{equation*}
\int_{\partial \Omega }u^{2}\left\vert x\right\vert ^{l/2}d\mathcal{H}^{1}\leq
C(\Omega ,l)\int_{\partial \Omega }u^{2}d\mathcal{H}^{1}\leq C(\Omega
,l)\int_{\Omega }\left( u^{2}+\left\vert \nabla u\right\vert ^{2}\right) dx 
\text{ \ }\forall u\in H^{1}(\Omega ).
\end{equation*}
Finally let us show that there exists a constant $C=C(\Omega ,l)$ such that 
\begin{equation*}
\int_{\Omega }u^{2}\left\vert x\right\vert ^{l}dx=\left\Vert u\right\Vert
_{L^{2}(\Omega ;\left\vert x\right\vert ^{l})}^{2}\leq C\int_{\Omega }\left(
u^{2}+\left\vert \nabla u\right\vert ^{2}\right) dx\text{ \ }\forall u\in
H^{1}(\Omega ).
\end{equation*}
Since the embedding of $H^{1}(\Omega )$ in $L^{p}(\Omega )$ is compact for
any $p\geq 1$ we have 
\begin{equation}
\int_{\Omega }u^{2}\left\vert x\right\vert ^{l}dx\leq \left(
\int_{E}u_{n}^{2p}dx\right) ^{\frac{1}{p}}\left( \int_{E}\frac{1}{\left\vert
x\right\vert ^{\left\vert l\right\vert q}}dx\right) ^{\frac{1}{q}}
\label{holder}
\end{equation}
where $\dfrac{1}{p}+\dfrac{1}{q}=1$ and $p,q>1.$ Now choose $q=\widetilde{q}
\in \left( 1,\dfrac{2}{\left\vert l\right\vert }\right) ,$ so that\ $
\left\vert l\right\vert q<2,$ and $p=\widetilde{p}=\dfrac{\widetilde{q}}{ 
\widetilde{q}-1}$ in (\ref{holder}) we obtain 
\begin{equation*}
\int_{\Omega }u^{2}\left\vert x\right\vert ^{l}dx\leq \left( \int_{\Omega
}u^{2\widetilde{p}}dx\right) ^{\frac{1}{\widetilde{p}}}\left( \int_{\Omega } 
\frac{1}{\left\vert x\right\vert ^{\left\vert l\right\vert \widetilde{q}}}
dx\right) ^{\frac{1}{\widetilde{q}}}\leq C\left( \int_{\Omega }\frac{1}{
\left\vert x\right\vert ^{\left\vert l\right\vert \widetilde{q}}}dx\right)
^{ \frac{1}{\widetilde{q}}}\leq C\left\vert \Omega \right\vert ^{\frac{2+l 
\widetilde{q}}{2\widetilde{q}}}.
\end{equation*}
Incidentally note that the arguments above also show that $H^{1}(\Omega )$
is compactly embedded in $L^{2}(\Omega ;\left\vert x\right\vert ^{l}dx)$.

Now we can consider the problem 
\begin{equation*}
\inf_{w\in H^{1}(\Omega )\backslash \left\{ 0\right\} }F(u) 
=
 \inf_{w\in
H^{1}(\Omega )\backslash \left\{ 0\right\} }\frac{\displaystyle\int_{\Omega
}\left\vert \nabla w\right\vert ^{2}dx+\beta \displaystyle\int_{\partial
\Omega }w^{2}\left\vert x\right\vert ^{l/2}d\mathcal{H}^{1}}{\displaystyle 
\int_{\Omega }w^{2}\left\vert x\right\vert ^{l}dx}.
\end{equation*}
We claim that the infimum above is attained. To this aim let $\left\{
u_{n}\right\} _{n\in \mathbb{N}}$ be a minimizing sequence. Without loss of
generality we may assume that 
\begin{equation}
\int_{\Omega }u_{n}^{2}\left\vert x\right\vert ^{l}dx=1\text{ \ }\forall
n\in \mathbb{N}.  \label{1}
\end{equation}
Clearly such a sequence is bounded in $H^{1}(\Omega ).$ Therefore, up to a
subsequence, we have that there exists $u\in H^{1}(\Omega )$ such that 
\begin{equation}
\left\{ 
\begin{array}{c}
u_{n}\rightarrow u\text{ \ a.e. in }\Omega \\ 
\\ 
u_{n}\rightarrow u\text{ in }L^{p}(\Omega )\text{ \ }\forall p\geq 1 \\ 
\\ 
u_{n}\rightarrow u\text{ weakly in }H^{1}(\Omega ) \\ 
\\ 
\displaystyle\int_{\partial \Omega }u_{n}^{2}\left\vert x\right\vert ^{l/2}d\mathcal{H}
^{1}\rightarrow \int_{\partial \Omega }u^{2}\left\vert x\right\vert ^{l/2}d 
\mathcal{H}^{1}
\end{array}
\right.  \label{trace_emb}
\end{equation}
with 
\begin{equation*}
\int_{\Omega }u^{2}\left\vert x\right\vert ^{l}dx=1.
\end{equation*}
Hence, using (\ref{1}), (\ref{trace_emb}) and the weak lower semicontinuity
of the $L^{2}-$norm of the gradient we obtain our claim 
\begin{equation*}
\liminf_{n\rightarrow \infty }F(u_{n})=F(u)=\min_{w\in H^{1}(\Omega )\backslash
\left\{ 0\right\} }F(w).
\end{equation*}
Let $\lambda _{1,l}(\Omega )$ denote the first eigenvalue of the problem 
\begin{equation}
\left\{ 
\begin{array}{cc}
-\Delta u=\lambda (\Omega )\left\vert x\right\vert ^{l}u & \text{in \ }\Omega
\\ 
&  \\ 
\dfrac{\partial u}{\partial \nu }+\beta \left\vert x\right\vert ^{l/2}u=0 & 
\text{on \ }\partial \Omega .
\end{array}
\right.
\end{equation}
By the consideration above $\lambda _{1,l}(\Omega )$ has the following
variational characterization

\begin{equation*}
\lambda _{1,l}(\Omega )=\min_{w\in H^{1}(\Omega )\backslash \left\{
0\right\} }\frac{\displaystyle\int_{\Omega }\left\vert \nabla w\right\vert
^{2}dx
+
\beta \displaystyle\int_{\partial \Omega }w^{2}\left\vert
x\right\vert ^{l/2}d\mathcal{H}^{1}}{\displaystyle\int_{\Omega
}w^{2}\left\vert x\right\vert ^{l}dx}.
\end{equation*}
Now we are in position to prove the weighted Faber-Krahn inequality (\ref
{F_K}), see also \cite{Ke} and \cite{ANT}.

\vspace{0.5cm}

\begin{proof}[Proof of Theorem \protect\ref{FK}.]
Let $u_{1}$ be an eigenfunction corresponding to $\lambda _{1,l}(\Omega ),$
that is 
\begin{equation}
\left\{ 
\begin{array}{cc}
-\Delta u_{1}=\lambda _{1,l}(\Omega )\left\vert x\right\vert ^{l}u_{1} & 
\text{in \ }\Omega  \\ 
&  \\ 
\dfrac{\partial u_{1}}{\partial \nu }+\beta \left\vert x\right\vert
^{l/2}u_{1}=0 & \text{on \ }\partial \Omega .
\end{array}
\right. 
\end{equation}
Denoting by $z$ the solution to 
\begin{equation}
\left\{ 
\begin{array}{cc}
-\Delta z=\lambda _{1,l}(\Omega )\left\vert x\right\vert ^{l}u_{1}^{\sharp }
& \text{in \ }\Omega ^{\sharp } \\ 
&  \\ 
\dfrac{\partial z}{\partial \nu }+\beta 
(r^{\sharp })^{l/2}z=0 & 
\text{on \ }\partial \Omega ^{\sharp }.
\end{array}
\right.   \label{Eq_z}
\end{equation}
Inequality (\ref{L^2}) gives 
\begin{equation*}
\int_{\Omega }u_{1}^{2}\left\vert x\right\vert ^{l}dx=\int_{\Omega ^{\sharp
}}\left( u_{1}^{\sharp }\right) ^{2}\left\vert x\right\vert ^{l}dx\leq
\int_{\Omega ^{\sharp }}z^{2}\left\vert x\right\vert ^{l}dx,
\end{equation*}
which, together with the Cauchy-Schwarz inequality, implies 
\begin{equation}
\label{CS}
\int_{\Omega ^{\sharp }}u_{1}^{\sharp }z\left\vert x\right\vert ^{l}dx\leq
\left( \int_{\Omega ^{\sharp }}\left( u_{1}^{\sharp }\right) ^{2}\left\vert
x\right\vert ^{l}dx\right) ^{\frac{1}{2}}\left( \int_{\Omega ^{\sharp
}}z^{2}\left\vert x\right\vert ^{l}dx\right) ^{\frac{1}{2}}\leq \int_{\Omega
^{\sharp }}z^{2}\left\vert x\right\vert ^{l}dx.
\end{equation}
Multiplying equation (\ref{Eq_z}) by $z$,  integrating and taking into account
of (\ref{CS}) we finally obtain 
\begin{eqnarray*}
\lambda _{1,l}(\Omega ) 
&=&
\frac{\displaystyle\int_{\Omega ^{\sharp
}}\left\vert \nabla z\right\vert ^{2}dx+\beta \displaystyle\int_{\partial
\Omega ^{\sharp }}z^{2}\left\vert x\right\vert ^{l/2}d\mathcal{H}^{1}}
{\displaystyle\int_{\Omega ^{\sharp }}  u_{1}^{\sharp }   z  \left\vert x\right\vert ^{l}dx} \\
&\geq &
\frac{\displaystyle\int_{\Omega ^{\sharp }}\left\vert \nabla
z\right\vert ^{2}dx+\beta \displaystyle\int_{\partial \Omega ^{\sharp
}}z^{2}\left\vert x\right\vert ^{l/2}d\mathcal{H}^{1}}
{\displaystyle
\int_{\Omega ^{\sharp }}z^{2}\left\vert x\right\vert ^{l}dx}\geq \lambda
_{1,l}(\Omega ^{\sharp }).
\end{eqnarray*}
\end{proof}

\end{document}